\newcommand{\rank}{\mathop{\rm rank}\nolimits}
\newcommand{\supp}{\mathop{\rm Supp}\nolimits}
\newcommand{\m}{{\mathfrak m}}
\newcommand{\fitt}{{\rm Fitt}}
\newcommand{\C}{{\mathcal{C}}}
\newcommand{\K}{\mathbb K}
\newcommand{\spn}{{\rm Span }}  
\newcommand{\M}{\mathsf M}
\newcommand{\hi}{{\rm ht}}
\newtheorem{defn0}{Definition}[section]
\newtheorem{prop0}[defn0]{Proposition}
\newtheorem{quest0}[defn0]{Question}
\newtheorem{thm0}[defn0]{Theorem}
\newtheorem{lem0}[defn0]{Lemma}
\newtheorem{corollary0}[defn0]{Corollary}
\newtheorem{example0}[defn0]{Example}
\newtheorem{remark0}[defn0]{Remark}
\newtheorem{prob0}[defn0]{Problem}
\newtheorem{conj}{Conjecture}
\newenvironment{prop}{\begin{prop0}}{\end{prop0}}
\newenvironment{thm}{\begin{thm0}}{\end{thm0}}
\newenvironment{lem}{\begin{lem0}}{\end{lem0}}
\newenvironment{cor}{\begin{corollary0}}{\end{corollary0}}
\newenvironment{ex}{\begin{example0}\rm}{\end{example0}}
\newenvironment{rem}{\begin{remark0}\rm}{\end{remark0}}
\numberwithin{equation}{subsection}  
\begin{document}

\title{Generalized star configurations and the Tutte polynomial}
\author{Benjamin Anzis, Mehdi Garrousian and \c{S}tefan O. Toh\v{a}neanu}

\subjclass[2010]{Primary 05B35; Secondary: 13D40, 94B27, 11T71, 14G50} \keywords{star configuration, Tutte polynomial, Hilbert polynomial, generalized Hamming weights, free resolution. \\ \indent Anzis' address: Department of Mathematics, University of Idaho, Moscow, Idaho 83844-1103, USA, Email: anzi4123@vandals.uidaho.edu\\ \indent Garrousian's Address: Departamento de Matem\'aticas, Universidad de los Andes, Cra 1 No. 18A-12, Bogot\'a, Colombia, Email: m.garrousian@uniandes.edu.co\\ \indent Tohaneanu's Address: Department of Mathematics, University of Idaho, Moscow, Idaho 83844-1103, USA, Email: tohaneanu@uidaho.edu, Phone: 208-885-6234, Fax: 208-885-5843.}

\begin{abstract}
From the generating matrix of a linear code one can construct a sequence of generalized star configurations which are strongly connected to the generalized Hamming weights and the underlying matroid of the code. When the code is MDS, the matrix is generic and we obtain the usual star configurations. In our main result, we show that the degree of a generalized star configuration as a projective scheme is determined by the Tutte polynomial of the code. In the process, we obtain preliminary results on the primary decomposition of the defining ideals of these schemes. Additionally, we conjecture that these ideals have linear minimal free resolutions and prove partial results in this direction.
\end{abstract}

\maketitle

\section{Introduction}

Let $\mathbb K$ be any field, and let $\mathcal C=(\ell_1,\ldots,\ell_n)$
denote a collection of $n$ linear forms in $R:=\mathbb K[x_1,\ldots,x_k]$,
possibly with repetitions. Throughout these notes, $\langle
\ell_1,\ldots,\ell_n\rangle=\langle x_1,\ldots,x_k\rangle=:\mathfrak m$. Let
$a\in\{1,\ldots,n\}$ and let $I_a(\C)\subset R$ be the ideal of $R$ generated
by all $a$-fold products of the linear forms in $\mathcal C$, i.e.
  \[
    I_a(\C)=\langle \ell_{i_1}\cdots\ell_{i_a}|1\leq i_1<\cdots<i_a\leq n \rangle.
  \]
The projective scheme with defining ideal $I_a(\C)$ will be called a {\em generalized star configuration scheme}.

If every $k$-subset of the linear forms in $\mathcal C$ is linearly independent, then $V(I_a(\C))$ will be a star configuration of codimension $n-a+1$ in the usual sense, as in \cite{GHM} and the citations therein, where their homological properties are studied. In particular, it is known that they are arithmetically Cohen-Macaulay, the $h$-vectors and the degrees are known (see
\cite[Proposition 2.9]{GHM}), and their coordinate rings have Eagon-Northcott
free resolutions, which are in fact linear (see\cite[Remark 2.11]{GHM}). When it comes to generalized star configurations different than the usual ones, almost nothing is known about the same properties listed above. Generalized star configurations (to be specific, their defining ideals) first occurred relatively recently in the coding theoretical context, and therefore they did not receive enough attention from the commutative algebraists. With certainty we can say that these schemes cannot be fully understood unless one studies them from both perspectives: coding theory and commutative algebra. For instance, as we shall see below, the dimensions of these schemes are entangled with the generalized Hamming weights of the associated linear code.

The main goal of these notes is to describe the degree of any generalized star
configuration scheme from the coefficients of the Tutte polynomial of the
matroid of the associated linear code (see Theorem \ref{main}). In obtaining
this result, we give a partial description of the the primary decomposition of the
ideals $I_a(\C)$. Conjecturally, we believe that for any $a$ and $\C$ the
ideal $I_a(\C)$ has a linear graded free resolution.

We recommend \cite{JuPe} for background information on linear codes, basic
matroid theory, and the Tutte polynomial; we recommend \cite{Ei0} as a
reference on the basic commutative/homological algebra used in these notes.

\subsection{Linear codes.} Let us fix an arbitrary field $\mathbb{K}$. An
$[n,k,d]$-linear code $\mathcal C$ over $\K$ is a $k$-dimensional vector space
embedded in $\K^n$ as the image of a generating matrix $G$, where $\rank
(G)=k$. In canonical bases, we write
  \[
    G=\left[\begin{array}{cccc}a_{11}&a_{12}&\cdots&a_{1n}\\ a_{21}&a_{22}&\cdots&a_{2n}\\\vdots&\vdots&
    &\vdots\\ a_{k1}&a_{k2}&\cdots&a_{kn}\end{array}\right],
  \]
where we understand that $a_{ij}\in\mathbb K$, and $\mathcal C$ is the image of the injective linear map
\[\phi:\mathbb K^k\stackrel{G}\longrightarrow \mathbb K^n.
\]
A {\em codeword} is a vector in the row space of $G$, and a subcode is a
subspace, $n$ is the {\em length} of $\mathcal C$, $k$ is the {\em dimension}
of $\mathcal C$, and $d$ is the {\em minimum distance (or Hamming distance)
of} $\mathcal C$, the smallest number of non-zero entries in a non-zero
codeword. These numbers are called the {\em parameters of $mathcal C$}. It is worth mentioning that if $n$ doesn't count any zero columns that possible occur in $G$, then this parameter is called the {\em effective length} of $\mathcal C$. Throughout this paper, most of the time the linear codes considered are derived from collections of (nonzero) linear forms, so their generating matrices $G$ do not have any zero columns.

\begin{rem} We will abuse notation and say that $\mathcal
C=(\ell_1,\ldots,\ell_n)$ is the collection of linear forms in $R:=\mathbb
K[x_1,\ldots,x_k]$ dual to the columns of $G$. When starting with a
collection of linear forms (arrangement of hyperplanes), the associated
linear code is the linear code whose generating matrix's columns are dual to
the given linear forms. If any $k$ of the linear forms are linearly
independent (the setup of star configurations), then the associated linear
code is maximum distance separable (MDS), meaning that $d=n-k+1$, which is
the maximum possible.

The ideals $I_a(\C)$ of a linear code $\C$ first appeared in \cite{DePe},
where the authors show that $d$ is the greatest index $a$ such that
$V(I_a(\C))\subset {\mathbb{P}}^{k-1}$ is empty. In \cite[Theorem 3.1]{To1},
this was refined as follows: for all $a=1,\dots, d$, $I_a(\C)=\m^a$. Moreover,
$V(I_{d+1}(\C))$ is a zero dimensional scheme where each point corresponds to
a minimum distance projective codeword. These ideas are further developed in
\cite{GaTo}, where a Fitting module $\fitt(\C)$ is introduced and it is shown
that the $\alpha$-invariant (the lowest homogeneous degree in which a module
is generated) of $\m\fitt(\C)$ determines the minimum distance.
\end{rem}

Let $\mathcal C$ be an $[n,k,d]$-linear code. Let $\mathcal D\subseteq \mathcal C$ be a subcode. The support of $\mathcal D$ is $$Supp(\mathcal D):=\{i :\exists (x_1,\ldots,x_n)\in \mathcal D \mbox{ with }x_i\neq 0\}.$$ Let $m(\mathcal D):=|Supp(\mathcal D)|$ be the cardinality of the support of $\mathcal D$.

For any $r=1,\ldots,k$, the {\em $r^{\rm {th}}$ generalized Hamming weight of $\mathcal C$} is the positive number $$d_r(\mathcal C):=\min_{\mathcal D\subseteq \mathcal C,\,\dim\mathcal D=r}m(\mathcal D).$$ By convention, $d_0(\mathcal C) = 0$.

Suppose the generating matrix of $\mathcal C$ (which is a $k\times n$ matrix
of rank $k$) is in the reduced echelon form: $$G:=(I_k|A).$$ The parity check
matrix (which is an $(n-k)\times n$ matrix) is $$H:=(-A^T|I_{n-k}),$$ and has
the property that $G \cdot H^T=0$, leading to the fact that $H$ is the
generating matrix for the dual code of $\mathcal C$, denoted $\mathcal
C^{\perp}$. $G$ and $H$ each determine a matroid that we will respectively
denote by $\M=\M(\C)$ with rank function $r$ and $\M^*=\M(\C^{\perp})$ with
rank function $r^*$. This means for instance that if $I\subseteq [n]$, then
$r(I)=\rank G_I$, where $G_I$ is the $k\times |I|$ minor of $G$ determined by
$I$.

Among many other formulations, a matroid can also be described by  its closure operator. Evidently, if $I\subseteq J$, then $\ker G_I\supseteq \ker G_J$. In our context, if $I\subseteq [n]$, then the closure of $I$, denoted $cl(I)$, is the largest superset $J$ with $\ker G_I=\ker G_J$.

A loop in a matroid is an element of rank zero and a coloop is a loop in the dual matroid. More explicitly, $i\in[n]$ is a loop if the $i^{\textrm{th}}$ column of $G$ is zero and it is a coloop if $r([n]\setminus i)<r([n])$.

There are several constructions of minors of a given matroid. Most notably,
given an element $i$ in a matroid $\M$, one can construct the deletion
$\M'=\M\setminus i$ and the contraction $\M''=\M/i$ on the ground set
$[n]\setminus i$, with rank function given by
  \[
    r'(I):=r(I), \quad r''(I):=r(I\cup i)-r(i).
  \]
See \cite{JuPe} for further details.

\medskip

\begin{thm}\cite{We}\label{thm:We}\leavevmode

\begin{enumerate}
  \item $d_1(\mathcal C)$ is the minimum distance $d$ of $\mathcal C$.
  \item $1\leq d_1(\mathcal C)<d_2(\mathcal C)<\cdots<d_k(\mathcal C)\leq n$.
  \item $d_r(\mathcal C)\leq n-k+r$.
  \item $d_r(\C)=\min\{|I|: |I|-r^*(I)=r\}$.
  \item $\{d_r(\mathcal C):1\leq r\leq k\}=\{1,\ldots,n\}\setminus\{n+1-d_s(\mathcal C^{\perp}):1\leq s\leq n-k\}$.
\end{enumerate}
\end{thm}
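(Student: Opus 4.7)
The plan is to handle parts (1)--(3) by elementary subcode arguments, part (4) by a dimension count linking supports to matroid rank, and part (5) as a consequence of (4) combined with matroid duality.

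For (1), any one-dimensional subcode is $\spn\{c\}$ for some nonzero codeword $c$; since scaling preserves the vanishing of coordinates, its support equals $\supp(c)$, and minimizing recovers the classical minimum distance $d$. For (2), pick an $r$-dimensional $\D$ achieving $m(\D)=d_r$ and any coordinate $i\in\supp(\D)$; the hyperplane $\D':=\{c\in\D : c_i=0\}$ has dimension at least $r-1$ and support strictly contained in $\supp(\D)$, whence $d_{r-1}\leq m(\D')<d_r$. For (3), pick any $k-r$ coordinate positions and project $\C$ onto them; the kernel of this projection has dimension at least $r$, and any $r$-dimensional subcode lying in the kernel has support bounded by $n-(k-r)$.

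The engine of parts (4) and (5) is the identity
\[
\dim\C(J)=k-r(J),\qquad \C(J):=\{c\in\C : c_j=0 \textrm{ for all } j\in J\},
\]
proved by writing $c=vG$ and noting that vanishing on $J$ forces $v$ to annihilate the columns indexed by $J$, combined with $G$ having full row rank. The dual rank formula $r^*(I)=|I|-k+r([n]\setminus I)$ then yields $|I|-r^*(I)=k-r([n]\setminus I)=\dim\C([n]\setminus I)$. An $r$-dimensional subcode with support inside $I$ is exactly an $r$-subspace of $\C([n]\setminus I)$, so such a subcode exists iff $|I|-r^*(I)\geq r$. Hence $d_r(\C)\leq\min\{|I|:|I|-r^*(I)\geq r\}$, and the reverse bound follows by taking $I=\supp(\D)$ for a minimizing $\D$. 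To sharpen $\geq$ to $=$, observe that if $|I|-r^*(I)>r$ then $r([n]\setminus I)<k-r$, so adjoining any $a\in I$ to $[n]\setminus I$ keeps the rank at most $k-r$, meaning $I\setminus\{a\}$ still satisfies the condition with strictly smaller cardinality; this contradicts minimality, proving (4).

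Part (5) I regard as the main obstacle. Applying (4) to $\C^\perp$ and using $(\M^*)^*=\M$ gives $d_s(\C^\perp)=\min\{|I|:|I|-r(I)=s\}$, the same combinatorial recipe with $r$ and $r^*$ swapped. Equivalently, $n-d_r(\C)$ is the maximum size of a subset of $[n]$ of matroid rank $k-r$, while $n-d_s(\C^\perp)$ is the maximum size of a subset of nullity $s$. The strategy is to build a bijection from $\{1,\ldots,n\}$ onto the disjoint union $\{d_r(\C):1\leq r\leq k\}\sqcup\{n+1-d_s(\C^\perp):1\leq s\leq n-k\}$ by walking through $[n]$ one element at a time and tracking how $r$ and the nullity $|\cdot|-r(\cdot)$ evolve on a carefully chosen nested family of witnesses; at each step either the rank or the nullity jumps by one, contributing to exactly one of the two sequences, and the total of $k+(n-k)=n$ jumps matches the target cardinality. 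The delicate point is calibrating the nested witnesses so that the values produced are the actual minima on both sides rather than just bounds, which is where I expect the bulk of the work to lie.
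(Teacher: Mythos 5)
The paper offers no proof of this theorem --- it is quoted from Wei's paper \cite{We} --- so there is nothing internal to compare against; I assess your argument on its own terms.

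Parts (1)--(4) are correct and complete. The reductions in (1)--(3) are the standard elementary ones, and your key identity $\dim \C(J)=k-r(J)$ for $\C(J)=\{c\in\C: c_j=0 \text{ for all } j\in J\}$ is exactly the mechanism the paper itself exploits in Remark \ref{lattice} and Corollary \ref{genhamming}; your sharpening of $\min\{|I|:|I|-r^*(I)\geq r\}$ to $\min\{|I|:|I|-r^*(I)=r\}$ by deleting an element from a minimizer is sound, since adjoining one element to $[n]\setminus I$ raises the rank by at most one.

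Part (5) is a genuine gap, and you have essentially flagged it yourself. What you give is a counting frame ($k+(n-k)=n$ values to place into $[n]$) plus the hope of a ``carefully chosen nested family of witnesses'' along a chain through $[n]$ whose rank jumps and nullity jumps produce the two sequences. The hard content of Wei's duality is precisely that the two sets $\{d_r(\C)\}$ and $\{n+1-d_s(\C^\perp)\}$ are \emph{disjoint}; given disjointness, parts (2) applied to $\C$ and $\C^\perp$ give $k$ and $n-k$ distinct values respectively, and cardinality finishes the job. Your sketch supplies neither the disjointness argument nor the nested witnesses: the sets achieving $\max\{|J|:r(J)=k-j\}$ for different $j$ (equivalently, the minimizers in (4) for different $r$) need not form a chain, so the existence of a single filtration $\emptyset=J_0\subset\cdots\subset J_n=[n]$ realizing the true minima on both sides simultaneously is exactly what must be proved, and no mechanism for producing it is proposed. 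Concretely, you need either to exhibit such a chain or to show directly that $d_r(\C)=n+1-d_s(\C^\perp)$ is impossible for all $r,s$ (the usual route: from a set $I$ witnessing $d_s(\C^\perp)$, i.e., a minimum-size set of nullity $s$ in $\M$, derive a contradiction with the characterization of $d_r(\C)$ in (4) at the value $n+1-|I|$). As written, (5) remains an announced strategy rather than a proof.
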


Note that since we do not allow zero columns in the generating matrix (i.e., loopless matroid), $d_k(\C)$ is always equal to $n$. Also, since the rank of the dual matroid is given by
\[
r^*(I)=r([n]\setminus I)+|I|-r(\M),
\]
 we have the following immediate corollary.

\begin{cor}\label{genhamming} Let $0\leq j\leq k$. The $j^{\rm {th}}$ generalized Hamming weight in terms of $\M$ is characterized as follows.
 \[
  d_j(\C)=\min\{|I|:r([n]\setminus I)=k-j\}=n - \max\{|J|: r(J)=k-j\}
 \]
\end{cor}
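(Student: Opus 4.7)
The plan is to derive this directly from Theorem~\ref{thm:We}(4) by substituting the given duality formula for $r^*$. Starting from
\[
d_j(\C)=\min\{|I|:|I|-r^*(I)=j\},
\]
I would plug in $r^*(I)=r([n]\setminus I)+|I|-r(\M)$, which yields
\[
|I|-r^*(I)=r(\M)-r([n]\setminus I).
\]
Since the generating matrix has rank $k$, we have $r(\M)=k$, so the defining condition $|I|-r^*(I)=j$ becomes simply $r([n]\setminus I)=k-j$. This gives the first equality.

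For the second equality, I would perform the change of variables $J:=[n]\setminus I$, so that $|I|=n-|J|$ and the constraint $r([n]\setminus I)=k-j$ becomes $r(J)=k-j$. Minimizing $|I|$ over such $I$ is then equivalent to maximizing $|J|$ over such $J$, giving
\[
d_j(\C)=n-\max\{|J|:r(J)=k-j\}.
\]
One small sanity check: the feasible set is nonempty for every $0\leq j\leq k$, since the rank function $r$ of a loopless matroid on $[n]$ is a nondecreasing integer-valued function with $r(\emptyset)=0$ and $r([n])=k$, so there exist subsets of every rank between $0$ and $k$. The main (indeed only) obstacle is keeping track of the direction of the substitution and remembering that $r(\M)=k$ under our standing assumption; no deeper matroid-theoretic input is needed beyond what Theorem~\ref{thm:We}(4) already provides.
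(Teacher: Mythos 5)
Your proof is correct and is exactly the argument the paper intends: the corollary is stated as an immediate consequence of Theorem~\ref{thm:We}(4) together with the duality formula $r^*(I)=r([n]\setminus I)+|I|-r(\M)$, and your substitution plus the complementation $J=[n]\setminus I$ is precisely that derivation. The nonemptiness check is a sensible extra touch but changes nothing.
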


By definition, {\em the Tutte polynomial} of a matroid $\M$ on the ground set $[n]$ with rank function $r$ is
\[
T_{\M}(x,y)=\sum_{I\subseteq [n]}(x-1)^{k-r(I)}(y-1)^{|I|-r(I)}.
\]
We use the shorthand notation $T_{\C}(x,y)$ to denote the Tutte polynomial of $\M(\C)$.

By \cite[Formula 11.7]{Du}, the generalized Hamming weights can be read off
the Whitney polynomial, which is $T_{\C}(x+1,y+1)$. To be in line with
\cite{Be} and \cite{GaTo}, and with some of the results that will appear later
on, we use the following version of Duursma's formula.

\begin{lem}\label{lemma:Tutte}
Consider $T_{\mathcal C}(x+1,y)=\sum_{i,j} c_{i,j} x^i y^j$ and let $p_r=\max \{ j: c_{r,j}\neq 0\}$. Then, the $r^{\rm {th}}$ generalized Hamming weight is determined by $d_r(\C)=n-p_r-k+r$.
\end{lem}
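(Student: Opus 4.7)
The plan is to expand $T_{\mathcal C}(x+1,y)$ directly from the Tutte polynomial's defining sum, read off an explicit formula for each coefficient $c_{i,j}$ as a signed sum over subsets $I\subseteq[n]$ of prescribed rank, identify $p_r$ as the top $y$-exponent occurring in the $x^r$-row, and finally convert this exponent into $d_r(\mathcal C)$ via Corollary \ref{genhamming}.

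Concretely, substituting $x\mapsto x+1$ kills the $(x-1)$-factor and gives
\[
T_{\mathcal C}(x+1,y)=\sum_{I\subseteq[n]} x^{k-r(I)}(y-1)^{|I|-r(I)}.
\]
Expanding the inner binomial in $y$ and collecting terms, I read off
\[
c_{i,j}=\sum_{\substack{I\subseteq[n]\\ r(I)=k-i}}(-1)^{|I|-(k-i)-j}\binom{|I|-(k-i)}{j}.
\]
Specializing $i=r$, any $I$ contributing to $c_{r,j}$ must satisfy $r(I)=k-r$ together with $|I|\geq j+(k-r)$, else the binomial vanishes. Setting $M:=\max\{|I|:r(I)=k-r\}$, this forces $p_r\leq M-(k-r)$.

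The heart of the argument, and the only place where a nontrivial check is needed, is verifying equality, i.e., that $c_{r,M-(k-r)}\neq 0$. At this extremal value of $j$, only subsets with $|I|=M$ survive; for each of them the binomial equals $\binom{M-(k-r)}{M-(k-r)}=1$ and the sign is $(-1)^{0}=+1$, so $c_{r,M-(k-r)}$ equals the (positive) count of rank-$(k-r)$ subsets of $[n]$ of maximal size $M$, which is nonzero. This pins down $p_r=M-(k-r)$, and Corollary \ref{genhamming} identifies $M=n-d_r(\mathcal C)$; rearranging gives $d_r(\mathcal C)=n-p_r-k+r$. The main concern I had was a possible cancellation in the alternating sum at $j=p_r$, but at this top $j$ there is nothing to cancel against, since all surviving contributions carry the same sign $+1$. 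Hence the lemma reduces to the straightforward bookkeeping sketched above.
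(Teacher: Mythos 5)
Your proof is correct. The paper itself supplies no argument for this lemma: it is presented as a reformulation of Duursma's Formula 11.7 for the Whitney polynomial $T_{\C}(x+1,y+1)$, with the reference \cite{Du} standing in for a proof. What you have written is a clean, self-contained verification from first principles: substituting $x\mapsto x+1$ into the defining sum of the Tutte polynomial, extracting $c_{r,j}$ as a signed sum of binomial coefficients over subsets of rank $k-r$, and observing that at the extremal exponent $j=M-(k-r)$ (with $M=\max\{|I|:r(I)=k-r\}$) the only surviving contributions come from subsets of size exactly $M$, each with coefficient $+1$, so no cancellation can occur and $p_r=M-(k-r)$. Combined with Corollary \ref{genhamming}, which gives $M=n-d_r(\C)$ and which legitimately precedes the lemma (it rests only on Wei's theorem and matroid rank duality), this yields the stated formula. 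Your identification of the sole danger point --- possible cancellation in the alternating sum at the top exponent --- and its resolution is exactly the right check. As a bonus, your computation shows $c_{r,p_r}$ equals the number of maximal-size rank-$(k-r)$ subsets (necessarily flats), which is consistent with Proposition \ref{C_I}. The only cosmetic caveat is that for the argument to parse one needs the set $\{I:r(I)=k-r\}$ to be nonempty, which holds for all $0\le r\le k$ since the matroid has rank $k$.
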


\begin{ex}\label{example0}
Consider the $[3,2,2]$-linear code (over any field), given by the generating matrix
 \[
  G=\begin{bmatrix}
             1 & 0 & 1\\
             0 & 1 & 1
    \end{bmatrix}.
 \]
The Tutte polynomial is equal to
\begin{eqnarray}
T_{\C}(x,y)&=&\underbrace{(x-1)^{2-0}(y-1)^{0-0}}_{\emptyset}+3\underbrace{(x-1)^{2-1}(y-1)^{1-1}}_{\{i\}}+ 3\underbrace{(x-1)^{2-2}(y-1)^{2-2}}_{\{i,j\}}\nonumber\\
&&+\underbrace{(x-1)^{2-2}(y-1)^{3-2}}_{\{1,2,3\}}\nonumber\\
&=&x^2+x+y,\nonumber
\end{eqnarray}
and hence $T_{\C}(x+1,y)=x^2+3x+y+2$, giving $p_1=p_2=0$, and therefore $d_1(\C)=3-0-2+1=2$, and $d_2(\C)=3-0-2+2=3$.
\end{ex}

\begin{rem} \label{lattice} The codes live in the row space of $G$ and the
matroid is based on the columns. Let us explain the interplay between these
different points of view. Let $I\subseteq [n]$ have rank $k-r$, for some
$r$, and consider the minor $G_I$. By definition, the rank of this matrix is
equal to the rank of $I$ in the matroid. The kernel of $G_I$ is of dimension
$r=k-(k-r)$. Let $v_1,\dots, v_r$ be a basis for this kernel and consider
$w_i=v_i G$, for $i=1,\dots, r$. The fact that $G$ is full rank implies that
$\{w_1,\dots, w_r\}$ is linearly independent. Let $\C_I=\spn \{w_1,\dots,
w_r\}$. It is immediate that $\C_I=\C_{cl(I)}$, where $cl(I)$ is the
closure of $I$ in the matroid. Therefore, the lattice of flats ${\mathcal
L}(\C)$ parametrizes subspaces in the linear code.
\end{rem}

We include the next result, which, despite being known, brings more details in
the spirit of the remark above; we present the proof for completeness.

\begin{prop}\label{C_I}
 Let $I$ be a flat of rank $k-r$ with $|I|=(k-r)+p_r-j$. Then $m(\C_I)=n-|I|=d_r(\C)+j$. Moreover, any $r$-dimensional subcode of minimal support is of the form $\C_I$ for some $I\subseteq [n]$ of rank $k-r$. In particular, $c_{r,p_r}$ counts the $r$-dimensional subcodes of minimal support.
\end{prop}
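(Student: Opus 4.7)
The plan has three moves: compute $m(\C_I)$ for $I$ a flat, invert to show every minimal-support subcode arises this way, and then read off the count from the Tutte polynomial. For the first move, the inclusion $Supp(\C_I)\subseteq [n]\setminus I$ is immediate since any $w=vG\in\C_I$ with $v\in\ker G_I$ vanishes on coordinates in $I$. For the reverse, I would use flatness of $I$: for each $j\notin I$, $cl(I\cup\{j\})\neq I$, so $\ker G_{I\cup\{j\}}\subsetneq \ker G_I$, and some $v\in\ker G_I$ satisfies $vg_j\neq 0$, producing a codeword with nonzero $j$-th entry. Hence $m(\C_I)=n-|I|$; plugging $|I|=(k-r)+p_r-j$ into $d_r(\C)=n-p_r-k+r$ from Lemma~\ref{lemma:Tutte} yields $d_r(\C)+j$.

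For the second move, given an $r$-dimensional subcode $\D$ of minimal support, I would set $I:=[n]\setminus Supp(\D)$. Every codeword of $\D$ vanishes on $I$, so $\D\subseteq\C_I$ and
\[
Supp(\D)\subseteq Supp(\C_I)\subseteq [n]\setminus I=Supp(\D),
\]
forcing $Supp(\C_I)=[n]\setminus I$. If $I$ were not a flat, then by Remark~\ref{lattice} we would have $\C_I=\C_{cl(I)}$ with $cl(I)\supsetneq I$, and the first move applied to the flat $cl(I)$ would give $Supp(\C_I)=[n]\setminus cl(I)\subsetneq [n]\setminus I$, a contradiction. So $I$ is a flat. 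Minimality of $m(\D)=n-|I|$ combined with Corollary~\ref{genhamming} forces $|I|=p_r+k-r$, which is the maximum size over subsets of rank at most $k-r$; this in turn forces $r(I)=k-r$, hence $\dim\C_I=r=\dim\D$ and $\D=\C_I$.

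For the counting, I would expand
\[
T_\C(x+1,y)=\sum_{I\subseteq[n]} x^{k-r(I)}(y-1)^{|I|-r(I)},
\]
so that $c_{r,p_r}$ collects, over sets $I$ with $r(I)=k-r$, contributions $(-1)^{|I|-(k-r)-p_r}\binom{|I|-(k-r)}{p_r}$. By Corollary~\ref{genhamming} and Lemma~\ref{lemma:Tutte} the maximum of $|I|$ in this range is precisely $p_r+k-r$, so only sets with this exact size contribute, each by $1$; such sets are automatically flats (any extra element would increase the rank by maximality), and by the bijection of the second paragraph they correspond to $r$-dimensional minimal-support subcodes. The main obstacle I anticipate is the flatness step: without automatically promoting $[n]\setminus Supp(\D)$ to a flat and pinning its rank at $k-r$, the correspondence and the clean Tutte-coefficient count would both break down.
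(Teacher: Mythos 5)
Your proposal is correct and follows essentially the same route as the paper: the support computation via the flat condition (a coordinate $j\notin I$ outside $\supp(\C_I)$ would force $\ker G_I=\ker G_{I\cup\{j\}}$), and the identification of $c_{r,p_r}$ as the count of maximal-size rank-$(k-r)$ flats, each contributing $1$ to the top $y$-coefficient. The only difference is that where the paper cites \cite[Theorem 5.17]{JuPe} for the second claim, you supply a complete self-contained argument (taking $I=[n]\setminus Supp(\D)$, showing it is a flat of rank exactly $k-r$ via the maximality forced by Corollary~\ref{genhamming}), which is a correct and welcome expansion of that step.
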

\begin{proof}
Let $\ker G_I={\rm Span} \{ v_1,\dots, v_r\}$. Clearly, $\supp (\C_I)\subseteq [n]\setminus I$. Assume that the inclusion is strict and pick an element, say $j\in ([n]\setminus I) \setminus\supp(\C_I)$. Let $w_i=v_iG$ appear as the $i^{\rm th}$ row of $G$. The $j^{\rm th}$ coordinate of $w_i$ is $(w_i)_j=v_iG_j=0$. This implies $\ker G_I=\ker G_{j\cup I}$ which contradicts $I$ being a flat. We conclude that the size of the support is exactly $n- |I|$.

The second claim is covered in the proof of \cite[Theorem 5.17]{JuPe}. Finally, $c_{r,p_r}$ gives the correct count of the subcodes of minimal support because every such subcode is of the form $\C_I$, where $|I|$ is maximal among all flats of the same rank and every such flat raises the coefficient by one.
\end{proof}

In our Example \ref{example0}, $c_{1,p_1}=c_{1,0}$ is the coefficient of $x$, and it is equal to $3$. There are exactly three codewords of minimum Hamming distance which correspond to each of the two rows and their difference.

\section{The degree of generalized star configuration schemes}

Let $\mathcal C=(\ell_1,\ldots,\ell_n)$ be a collection of $n$ linear forms in $R:=\mathbb K[x_1,\ldots,x_k]$, as at the beginning of Section 1. In what follows, we shed on some light on the primary decomposition of $I_a(\C)$ and use the result to compute its degree. All minimal primes of $I_a(\mathcal C)$ are ideals of the form $\langle \ell_{i_1},\ldots,\ell_{i_{n-a+1}}\rangle$ (see Section 2 in \cite{To1}). For clarifications, these minimal prime ideals are not minimally generated by those $n-a+1$ linear forms; in fact what makes the problem difficult and worth studying is the general situation when these minimal prime ideals are minimally generated by strict subsets of these $n-a+1$ linear forms.

Given a prime ${\mathfrak {p}}$ in $R$, we define the height (or codimension) of ${\mathfrak {p}}$ , written ${\rm ht}({\mathfrak {p}})$, to be the supremum of the lengths of all chains of prime ideals contained in ${\mathfrak {p}}$.

We are going to use the following notational conventions:

\begin{enumerate}
  \item If $I\subseteq [n]$, denote $\mathfrak p_I$ the prime ideal generated by all $\ell_i, i\in I$.
  \item If $\mathfrak i$ is an ideal, denote $\rho(\mathfrak i):=\{i,\ell_i\in\mathfrak i\}\subseteq [n]$.
  \item If $\mathfrak i$ is an ideal, denote $\nu(\mathfrak i):=|\rho(\mathfrak i)|$.
  \item We have $V({\mathfrak p_I})=\ker G_I$ and therefore, ${\rm ht} ({\mathfrak p_I})=\rank(G_I)=r(I)$.
\end{enumerate}

It is clear that $\rho(\mathfrak p_I)=cl(I)$, and $\mathfrak p_I=\mathfrak p_{cl(I)}$. Also, if $I,J\subseteq [n]$ are such that $\mathfrak p_I=\mathfrak p_J$, then $cl(I)=cl(J)$. Also, if $\mathfrak q$ is prime ideal, then $\mathfrak p_{\rho(\mathfrak q)}=\mathfrak q$.

\begin{rem} \label{connection} Suppose ${\rm ht}(I_a(\C))=k-r$, for some $0\leq r\leq k-1$.

If $\mathfrak p$ is a minimal prime over $I_a(\C)$, of height $k-r$, then from above $\mathfrak p= \langle \ell_{i_1},\ldots,\ell_{i_{n-a+1}}\rangle$ and we write $\mathfrak p=\mathfrak p_J,$ where $J=\{i_1,\ldots,i_{n-a+1}\}$. Furthermore, the rank of $J$ in the matroid is $r(J)=k-r$. Conversely, if $I\subseteq [n]$ is of rank $r(I)=k-r$, and with $|I|\geq n-a+1$, then $\mathfrak p_I$ is a prime ideal of height $k-r$ containing $I_a(\C)$. Since ${\rm ht}(\mathfrak p_I)=k-r={\rm ht}(I_a(\C))$, then $\mathfrak p_I$ is also minimal over $I_a(\C)$.
\end{rem}

\medskip

The first result of this section is similar to \cite[Exercise 3.25]{DePe}, and concerns the heights of the ideals $I_a(\C)$. Recall that $d_0(\C)=0$ by convention, and $d_k(\C)=n$ as we do not allow loops. Observe that the result of De Boer and Pellikaan is the case $r=0$ below.

\begin{prop}\label{height}
    For any $r=0,\ldots,k-1$, if $d_{r}(\C)<a\leq d_{r+1}(\C)$ , then $${\rm ht}(I_a(\C))=k-r.$$ In particular, $d_r(\mathcal C)=\max\{a:{\rm ht}(I_a(\mathcal C))=k-r+1\}$.
\end{prop}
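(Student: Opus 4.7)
The plan is to reduce the height computation to an optimization over subsets of $[n]$ and then apply \corref{genhamming}.

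First I would show that $\hi(I_a(\C))$ is governed by the minimum rank among sufficiently large index sets. For any prime $\mathfrak{p} \supseteq I_a(\C)$, primality applied to the generators $\ell_{i_1}\cdots\ell_{i_a}$ forces every $a$-subset of $[n]$ to meet $\rho(\mathfrak{p})$, so $\nu(\mathfrak{p}) \geq n-a+1$. Since $\mathfrak{p} \supseteq \mathfrak{p}_{\rho(\mathfrak{p})}$, this yields $\hi(\mathfrak{p}) \geq r(\rho(\mathfrak{p}))$. Conversely, as noted in \remref{connection}, any $\mathfrak{p}_J$ with $|J| \geq n-a+1$ contains $I_a(\C)$, because fewer than $a$ indices are omitted from $J$. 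Therefore,
\[
\hi(I_a(\C)) = \min\{r(J) : J \subseteq [n],\ |J| \geq n-a+1\}.
\]

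Next I would evaluate this minimum using \corref{genhamming}, which provides $\max\{|J| : r(J) = k-j\} = n - d_j(\C)$. A small auxiliary observation is that $\max\{|J| : r(J) \le k-r\} = \max\{|J| : r(J) = k-r\}$: since the matroid is loopless of rank $k$, any flat of rank $s' < k-r$ can be extended one element at a time to a strictly larger flat of rank $k-r$. Hence the unrestricted maximum is attained at rank exactly $k-r$ and equals $n - d_r(\C)$.

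With this in hand, both bounds on the height fall out. Since $d_r(\C) < a$, there exists $J$ with $r(J) = k-r$ and $|J| = n - d_r(\C) \ge n-a+1$, producing a prime $\mathfrak{p}_J$ of height $k-r$ above $I_a(\C)$, and so $\hi(I_a(\C)) \le k-r$. If instead some $J'$ had $r(J') = k-r'$ with $r' \ge r+1$ and $|J'| \ge n-a+1$, the displayed equality would force $d_{r'}(\C) \le a-1$, contradicting $d_{r'}(\C) \ge d_{r+1}(\C) \ge a$ by the strict monotonicity of \thmref{thm:We}(2). The ``in particular'' statement then follows by inversion: $\hi(I_a(\C)) = k-r+1$ exactly when $d_{r-1}(\C) < a \le d_r(\C)$, whose largest solution is $a = d_r(\C)$. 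I do not foresee a deep obstacle; the only care needed is in the flat-extension step (requiring looplessness) and in handling the edge cases $r=0$ and $r=k-1$ so the strict monotonicity of the $d_j(\C)$ is invoked cleanly.
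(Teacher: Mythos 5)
Your proof is correct and follows essentially the same route as the paper's: both arguments translate primes over $I_a(\C)$ into index sets of size at least $n-a+1$, apply Corollary \ref{genhamming} to bound their ranks, and conclude via the strict monotonicity of the $d_j(\C)$ from Theorem \ref{thm:We}(2). Your packaging is slightly cleaner in that you prove the single identity $\hi(I_a(\C))=\min\{r(J):|J|\geq n-a+1\}$ for all $a$ at once (deriving the $\nu(\mathfrak p)\geq n-a+1$ bound directly from primality rather than citing \cite{To1}), whereas the paper treats the endpoints $a=d_r(\C)+1$ and $a=d_{r+1}(\C)$ separately and interpolates by monotonicity of height.
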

\begin{proof}
For brevity, we will omit referring to $\C$ in our notations.

It is clear that if $1\leq a\leq a'\leq n$, then $I_{a'}\subset I_a$, and hence ${\rm ht}(I_{a'})\leq {\rm ht}(I_a)$. So the proof comes down to showing that for any $r=0,\ldots, k-1$, we have ${\rm ht}(I_{d_r+1})={\rm ht}(I_{d_{r+1}})=k-r$.

The height of $I_{d_r+1}$ is the minimum of the heights of its minimal primes. Any such minimal prime is generated by $n-(d_r+1)+1=n-d_r$ linear forms. Let $\mathfrak p$ be a minimal prime of minimum height. First, let us observe that ${\rm ht}(\frak p)\geq k-r$. If by contradiction, ${\rm ht}(\mathfrak p)= k-r-u$, for some $u\geq 1$, then in the matroid, the rank of the index set $J:=\rho(\mathfrak p)$ is $r(J)= k-(r+u)$. However, by Corollary \ref{genhamming}, this means that $n-d_{r+u}\geq |J|\geq n-d_r$. This gives $d_r\geq d_{r+u}$ which contradicts Theorem \ref{thm:We} (2). On the other hand, the same corollary shows that there is $J\subseteq [n]$ of rank $r(J)=k-r$ and $|J|=n-d_r$. Then, the prime ideal $\mathfrak p_J$ will contain $I_{d_r+1}$ and it has height $k-r$. So ${\rm ht}(I_{d_r+1})=k-r$.

It remains to show that ${\rm ht}(I_{d_{r+1}})=k-r$. The inequality $\leq$ is immediate since $I_{d_r+1}\supseteq I_{d_{r+1}}$. For the reverse inequality, let us take a minimial prime of $I_{d_{r+1}}$ and show that its height is $\geq k-r$. Let $\mathfrak q$ be one such minimal prime, and assume by contradiction that ${\rm ht}(\mathfrak q)=k-r-v$, for some $v\geq 1$. In the matroid, the rank of the index set $J':=\rho(\mathfrak q)$ is $r(J')= k-(r+v)$, hence, by Corollary \ref{genhamming}, $n-d_{r+v}\geq |J'|\geq n-d_{r+1}+1$. This leads to the contradiction $d_{r+1}-1\geq d_{r+v}$, for some $v\geq 1$.
\end{proof}

In Example \ref{example0}, our ideals are
\[
 I_1(\C)=\langle x_1, x_2\rangle, \, I_2(\C)=\langle x_1x_2, x_1(x_1+x_2),x_2(x_1+x_2)\rangle=\langle x_1,x_2\rangle^2, \, I_3(\C)=\langle x_1x_2(x_1+x_2)\rangle,
\]
with heights equal to $2, 2, 1$, respectively.

\medskip

Next we are interested in the primary decomposition of $I_a(\C)$. We denote ${\rm Min}_u(I_a(\C))$ to be the set of all distinct minimal prime ideals over $I_a(\C)$ of height $u$. The next result presents the primary components of lowest height of $I_a(\C)$, which are always unique in any minimal primary decomposition of $I_a(\C)$ (see \cite[Theorem 4.29]{Sh}).

\begin{prop}\label{primarydecomp} Let $r\in\{0,\ldots,k-1\}$, and let $d_{r}(\C)<a\leq d_{r+1}(\C)$. Then, $$I_a(\C)=\mathfrak p_1^{a-n+\nu(\mathfrak p_1)}\cap\cdots\cap \mathfrak p_s^{a-n+\nu(\mathfrak p_s)}\cap K,$$ where ${\rm Min}_{k-r}(I_a(\C))=\{\mathfrak p_1,\ldots,\mathfrak p_s\}$, and $K$ is an ideal of height $>k-r$.
\end{prop}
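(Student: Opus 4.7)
The plan is to establish this primary decomposition in three stages: a counting argument for the easy containment, a localization argument to identify $\mathfrak p_i^{a-n+\nu(\mathfrak p_i)}$ as the $\mathfrak p_i$-primary component of $I_a(\C)$, and a structural argument for the height of the remaining component $K$.

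For the containment, set $m_i := a - n + \nu(\mathfrak p_i)$. Take any generator $\ell_{j_1}\cdots \ell_{j_a}$ of $I_a(\C)$, with $j_1, \ldots, j_a$ distinct. Since $[n] \setminus \rho(\mathfrak p_i)$ has only $n - \nu(\mathfrak p_i)$ elements, at most that many of the $j_t$ can lie outside $\rho(\mathfrak p_i)$; therefore at least $a - (n - \nu(\mathfrak p_i)) = m_i$ of the factors belong to $\mathfrak p_i$, so the product lies in $\mathfrak p_i^{m_i}$. This gives $I_a(\C) \subseteq \bigcap_i \mathfrak p_i^{m_i}$.

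Next I would show that $\mathfrak p_i^{m_i}$ is the $\mathfrak p_i$-primary component in any minimal primary decomposition. Since $\mathfrak p_i$ is generated by $k-r$ linearly independent linear forms, it is a complete intersection prime and $\mathfrak p_i^{m_i}$ is $\mathfrak p_i$-primary. By the uniqueness of primary components at minimal primes, it suffices to check $I_a(\C) R_{\mathfrak p_i} = \mathfrak p_i^{m_i} R_{\mathfrak p_i}$ in the localization. The inclusion $\subseteq$ follows from the previous paragraph. For $\supseteq$, after localizing at $\mathfrak p_i$, the forms $\ell_j$ with $j \notin \rho(\mathfrak p_i)$ become units, so $I_a(\C) R_{\mathfrak p_i}$ is generated by the squarefree products $\ell_{j_1}\cdots\ell_{j_{m_i}}$ taken over distinct tuples $j_1,\ldots,j_{m_i} \in \rho(\mathfrak p_i)$, and one must verify that these products generate all of $\mathfrak p_i^{m_i} R_{\mathfrak p_i}$.

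Once the components at the $\mathfrak p_i$ are established to be $\mathfrak p_i^{m_i}$, the remaining primary components in a minimal primary decomposition of $I_a(\C)$ correspond either to minimal primes of height strictly greater than $k-r$ or to embedded primes, each of height exceeding $k-r$; setting $K$ to be their intersection yields $\mathrm{ht}(K) > k-r$ and completes the decomposition. The main obstacle is the claim in the previous step that the squarefree $m_i$-fold products of forms in $\rho(\mathfrak p_i)$ generate $\mathfrak p_i^{m_i}$ in the localization; this is a statement about $\nu(\mathfrak p_i)$ linear forms spanning the $(k-r)$-dimensional vector space associated to $\mathfrak p_i$, and the minimality hypothesis $\nu(\mathfrak p_i) \geq n-a+1$ together with the linear dependencies among those forms should produce enough squarefree products to span the degree-$m_i$ symmetric power of that space.
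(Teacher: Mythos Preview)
Your overall plan matches the paper's: localize at $\mathfrak p_i$, note that the forms outside $\rho(\mathfrak p_i)$ become units, and identify the resulting ideal as $\mathfrak p_i^{m_i}$. You correctly isolate the crux as showing that the squarefree $m_i$-fold products of the $\ell_j$ with $j\in\rho(\mathfrak p_i)$ generate all of $\mathfrak p_i^{m_i}$. But your justification of this step is only a hand-wave, and in particular it never invokes the hypothesis $a\le d_{r+1}(\C)$, which is indispensable.

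Here is what the paper does at precisely this point. After a change of coordinates write $\mathfrak p_i=\langle x_1,\dots,x_{k-r}\rangle$ and regard $\tilde{\C}:=(\ell_j:j\in\rho(\mathfrak p_i))$ as a $[\nu(\mathfrak p_i),\,k-r,\,\tilde d]$ linear code in $\tilde R=\K[x_1,\dots,x_{k-r}]$. Your claim is then exactly the statement $I_{m_i}(\tilde{\C})=\langle x_1,\dots,x_{k-r}\rangle^{m_i}$, which by \cite[Theorem~3.1]{To1} holds whenever $m_i\le\tilde d$. The missing step is a lower bound on $\tilde d$ coming from the global code: $\nu(\mathfrak p_i)-\tilde d$ is the maximum size of a subset of $\rho(\mathfrak p_i)$ of rank $k-r-1$, and any such subset is also a rank-$(k-r-1)$ subset of $[n]$, so Corollary~\ref{genhamming} gives $\nu(\mathfrak p_i)-\tilde d\le n-d_{r+1}(\C)$. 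Combining with $a\le d_{r+1}(\C)$ yields $m_i=a-n+\nu(\mathfrak p_i)\le\tilde d$, as required. Your lower bound $\nu(\mathfrak p_i)\ge n-a+1$ only says $m_i\ge 1$; it is the \emph{upper} bound on $a$ that forces the squarefree products to span, and without it the spanning claim can fail (e.g.\ for three generic linear forms in two variables, the single threefold product does not generate the cube of the maximal ideal).
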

\begin{proof} We fix $r\in\{0,\ldots,k-1\}$ and let $a=d_{r}(\C)+j\leq d_{r+1}(\C)$, for some $1\leq j\leq d_{r+1}(\C)-d_{r}(\C)$. So ${\rm ht}(I_a(\C))=k-r$.

Then, a partial primary decomposition of $I_a(\C)$ is of the form $I_a(\C)=\mathfrak q_1\cap\cdots\cap \mathfrak q_s\cap K$, where all $\mathfrak q_i$ are $\mathfrak p_i$-primary ideals with $\mathfrak p_i\in {\rm Min}_{k-r}(I_a(\C))$, and $K$ is an ideal of height $>k-r$.

If $\mathfrak p$ is a minimal prime over $I_a(\C)$ of height $k-r$, then we know that $\mathfrak p=\mathfrak p_I$, for some $I\subset [n]$ with $|I|=n-a+1$, and the rank $r(I)=k-r$. It is clear that $\nu(\mathfrak p)=|cl(I)|\geq n-a+1\geq k-r$.

After some reordering, we assume that $cl(I)=\{1,\ldots,\nu(\mathfrak p)\}$, and we can assume that $\mathfrak p=\langle x_1,\ldots,x_{k-r}\rangle$. Then, in the localization $R_{\mathfrak p}$, the linear forms $\ell_{\nu(\mathfrak p)+1},\ldots,\ell_n$, are invertible, and because of this, in $R_{\mathfrak p}$, $$I_a(\C)R_{\mathfrak p}=I_{a-n+\nu(\mathfrak p)}(\tilde{\C})R_{\mathfrak p},$$ where $\tilde{\C}:=(\ell_1,\ldots,\ell_{\nu(\mathfrak p)})\subset \tilde{R}:=\mathbb K[x_1,\ldots,x_{k-r}]$.

We have that $\tilde{\C}$ is an $[\nu(\mathfrak p),k-r,\tilde{d}]$-linear code. Then $\nu(\mathfrak p)-\tilde{d}$ is the maximum number of linear forms of $\tilde{\C}$ that generate a height $(k-r)-1=k-r-1$ ideal. By Corollary \ref{genhamming}, $$\nu(\mathfrak p)-\tilde{d}\leq n-d_{r+1}(\C).$$ Since $a=d_{r}(\C)+j\leq d_{r+1}(\C)$, we obtain $$a-n+\nu(\mathfrak p)\leq \tilde{d}.$$ So, from \cite[Theorem 3.1]{To1}, we have $$I_{a-n+\nu(\mathfrak p)}(\tilde{\C})=\mathfrak p^{a-n+\nu(\mathfrak p)}.$$ So each $\mathfrak q_i$ is of the desired form $\mathfrak p_i^{a-n+\nu(\mathfrak p_i)}$.
\end{proof}

\begin{rem} Note that if $r=0$, then ${\rm Min}_k(I_a(\C))=\{\mathfrak m\}$, and $\nu(\mathfrak m)=n$. Accordingly, \cite[Theorem 3.1]{To1} gives $I_a(\C)=\mathfrak m^a$. If $r=1$ and $j=1$, then \cite[Lemma 2.2]{To1} gives $\mathfrak q_i=\mathfrak p_i$ and that $s$ is the number of projective codewords of minimum weight. Our Proposition \ref{primarydecomp} generalizes these results.

\medskip

In Example \ref{example0}, if $0=d_0(\C)<a\leq d_1(\C)=2$, indeed we have $I_a(\C)=\langle x_1,x_2\rangle^a$. If $2=d_1(\C)<a\leq d_2(\C)=3$, then $a=3$, and $k-r=2-1=1$. If $\mathfrak p\in {\rm Min}_1(I_3(\C))$, then $\nu(\mathfrak p)\geq n-a+1=1$, and $\mathfrak p=\mathfrak p_I$, for some $I\subseteq [n]$ with $r(I)=1$. Then $I$ can be only $\{1\},\{2\},\{3\}$ (hence we also have $I=cl(I)$, so $\nu(\mathfrak p)=1$). So $$I_3(\C)=\langle x_1\rangle\cap\langle x_2\rangle\cap\langle x_1+x_2\rangle\cap K,\, {\rm ht}(K)>1.$$ This indeed matches with $I_3(\C)=\langle x_1x_2(x_1+x_2)\rangle$; we can take the ideal $K$ to be $\langle x_1,x_2\rangle^3$, or any ideal primary to the maximal ideal $\langle x_1,x_2\rangle$, that contains $\langle x_1\rangle\cap\langle x_2\rangle\cap\langle x_1+x_2\rangle$.

\medskip

We make note here that even when we take $a=d_1(\C)+1$, in general there is little known about the ideal $K$. In a particular case useful in coding theory, the proof of \cite[Theorem 3.1]{AnTo} gives some information about $K$, namely the maximal degree of a generator of $K$ that is not in the saturation of $I_a(\C)$ with respect to $\mathfrak{m}$.
\end{rem}

\subsection{The degree of \texorpdfstring{$I_a(\C)$}.} Let us denote the Hilbert polynomial of the projective space ${\mathbb P}^m$ by $P_m(t)=\binom{t+m}{m}$.

Let $I$ be any homogeneous ideal in $R:=\mathbb K[x_1,\ldots,x_k]$, of height $c\leq k-1$. Then the Hilbert polynomial of $R/I$ is $$HP(R/I,t)=\deg(I)P_{k-c-1}(t)+\Omega_{k-c-1}(t),$$ where $\Omega_{k-c-1}(t)$ is some polynomial of degree $< k-c-1$. In other words, $$HP(R/I,t)=\frac{\deg(I)}{(k-c-1)!}t^{k-c-1}+\mbox{ lower degree terms}.$$

If $c=k$, then $R/I$ is an $R$-module of finite length, and by definition $\deg(I)=\dim_{\mathbb K}(R/I)<\infty$; note that in this case, the Hilbert polynomial is 0.

The next three lemmas are standard; we present them for completeness.

\begin{lem}\label{HilbPoly} Let $I,J,K$ be homogeneous ideals in $R=\mathbb K[x_1,\ldots,x_k]$, with $I=J\cap K$ and ${\rm ht}(I)={\rm ht}(J)\leq k-1$. Then
\begin{enumerate}
  \item If ${\rm ht}(K)>{\rm ht}(J)$, then $\deg(I)=\deg(J)$.
  \item If ${\rm ht}(K)={\rm ht}(J)$, but ${\rm ht}(J+K)>{\rm ht}(I)$, then $\deg(I)=\deg(J)+\deg(K)$.
\end{enumerate}
\end{lem}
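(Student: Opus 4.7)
The plan is to apply the standard Mayer--Vietoris style short exact sequence of graded $R$-modules
$$0 \longrightarrow R/(J\cap K) \xrightarrow{\,\Delta\,} R/J \oplus R/K \xrightarrow{\,\sigma\,} R/(J+K) \longrightarrow 0,$$
where $\Delta(\bar r)=(\bar r,\bar r)$ and $\sigma(\bar a,\bar b)=\bar a - \bar b$; exactness is a direct check. Since Hilbert polynomials are additive on short exact sequences of finitely generated graded modules, this at once yields
$$HP(R/I,\,t)\;=\;HP(R/J,\,t)\;+\;HP(R/K,\,t)\;-\;HP(R/(J+K),\,t).$$

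The next step is to extract the coefficient of $t^{k-c-1}$ from both sides, where $c={\rm ht}(I)={\rm ht}(J)$. From the formula recalled just before the lemma, for any homogeneous ideal $L$ with $c_L := {\rm ht}(L)\leq k-1$ the polynomial $HP(R/L,t)$ has degree exactly $k-c_L-1$ with leading coefficient $\deg(L)/(k-c_L-1)!$, whereas for $c_L=k$ one has $HP(R/L,t)\equiv 0$. In either case, any ideal $L$ with ${\rm ht}(L)>c$ contributes nothing to the coefficient of $t^{k-c-1}$ on the right-hand side of the additivity identity.

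For part (1), ${\rm ht}(J+K)\geq{\rm ht}(K)>c$, so both $HP(R/K,t)$ and $HP(R/(J+K),t)$ drop out of the leading-coefficient comparison, giving $\deg(I)=\deg(J)$. For part (2), the assumption ${\rm ht}(K)=c$ forces $HP(R/K,t)$ to contribute $\deg(K)/(k-c-1)!$ to the leading coefficient, while the hypothesis ${\rm ht}(J+K)>c$ again kills the last term, yielding $\deg(I)=\deg(J)+\deg(K)$. There is no real obstacle here; the small things to watch are the edge case $k-c-1=0$ (where the leading coefficient is just the constant value of the Hilbert polynomial and the argument is unchanged) and the possibility that ${\rm ht}(K)$ or ${\rm ht}(J+K)$ equals $k$, which is harmless because the corresponding Hilbert polynomial is identically zero and so contributes nothing regardless of the numerical value of $\deg$.
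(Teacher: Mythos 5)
Your proposal is correct and follows exactly the paper's argument: the same short exact sequence $0\to R/(J\cap K)\to R/J\oplus R/K\to R/(J+K)\to 0$, additivity of Hilbert polynomials, and comparison of the coefficient of $t^{k-c-1}$. The extra care you take with the edge cases (height $k$, constant Hilbert polynomial) is a sound elaboration of the paper's one-line proof.
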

\begin{proof} Computing the corresponding Hilbert polynomials through the classical short exact sequence
 \[
  0\to \frac{R}{J\cap K}\to \frac{R}{J}\oplus \frac{R}{K}\to \frac{R}{J+K}\to 0
 \] and comparing leading terms, one obtains the two desired conclusions.
\end{proof}

\begin{lem}\label{HPpower}
 Let $I$ be a linear ideal of height $c\leq k-1$ (i.e. generated by $c$ linearly independent forms) in $R=\K[x_1,\dots, x_k]$, then for any $i\geq 1$
 \[
  HP(R/I^i,t)=\binom{c+i-1}{c}P_{k-c-1}(t)+\Omega_{k-c-1}(t).
 \]
\end{lem}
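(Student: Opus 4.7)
The strategy is to reduce to a coordinate case and then induct on $i$ using a standard short exact sequence, tracking only the top-degree behavior of the Hilbert polynomials.

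First, since $I$ is generated by $c$ linearly independent linear forms, a $\K$-linear change of coordinates on $R$ puts $I$ in the form $I=\langle x_1,\ldots,x_c\rangle$. Such a change of variables is a graded $\K$-algebra automorphism of $R$, so it preserves Hilbert polynomials, and we may assume this from now on. In particular $R/I\cong \K[x_{c+1},\ldots,x_k]$, so the base case $i=1$ reads $HP(R/I,t)=P_{k-c-1}(t)=\binom{c}{c}P_{k-c-1}(t)$, as desired.

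For the inductive step, I would use the short exact sequence of graded $R$-modules
\[
 0\to I^{i-1}/I^i \to R/I^i \to R/I^{i-1}\to 0,
\]
so that $HP(R/I^i,t)=HP(I^{i-1}/I^i,t)+HP(R/I^{i-1},t)$. The key structural fact is that, because $x_1,\ldots,x_c$ form a regular sequence, the associated graded ring $\bigoplus_{j\ge 0} I^j/I^{j+1}$ is isomorphic to the symmetric algebra of $I/I^2$ over $R/I$; concretely,
\[
  I^{i-1}/I^i \;\cong\; \bigoplus_{|\alpha|=i-1,\ \alpha\in\N^c} (R/I)(-(i-1))
\]
as graded $R$-modules, the number of summands being $\binom{c+i-2}{c-1}$ (the number of monomials of degree $i-1$ in $c$ variables). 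This is the part where one has to be slightly careful, but it is standard for linear complete intersections and can be verified either by exhibiting the monomial basis of $I^{i-1}/I^i$ over $R/I\cong \K[x_{c+1},\ldots,x_k]$ or by invoking $\Tor_1^R(R/I,R/I)=0$.

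Granting the decomposition, the Hilbert polynomial is
\[
  HP(I^{i-1}/I^i,t)=\binom{c+i-2}{c-1}P_{k-c-1}(t-(i-1)),
\]
whose leading term in $t$ is $\binom{c+i-2}{c-1}\cdot \frac{t^{k-c-1}}{(k-c-1)!}$, i.e.\ the leading coefficient of its $P_{k-c-1}$-expansion is $\binom{c+i-2}{c-1}$. Combined with the inductive hypothesis that the leading coefficient of $HP(R/I^{i-1},t)$ is $\binom{c+i-2}{c}$, Pascal's identity
\[
 \binom{c+i-2}{c-1}+\binom{c+i-2}{c}=\binom{c+i-1}{c}
\]
gives the claimed leading coefficient, with all discrepancies collected into a polynomial $\Omega_{k-c-1}(t)$ of degree strictly less than $k-c-1$. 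The only nontrivial step is the structural identification of $I^{i-1}/I^i$ as a free $R/I$-module; everything else is bookkeeping on leading terms.
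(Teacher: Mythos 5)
Your argument is correct, and it rests on the same underlying fact as the paper's proof: after the change of coordinates, the powers of $I=\langle x_1,\dots,x_c\rangle$ are monomial ideals, so $R/I^i$ (resp.\ each $I^{j}/I^{j+1}$) has an explicit monomial basis. The paper exploits this in one shot, writing $R/I^i\simeq (\K[x_1,\dots,x_c])_{\leq i-1}\otimes_{\K}\K[x_{c+1},\dots,x_k]$ as graded vector spaces and counting the $\binom{c+i-1}{c}$ monomials of degree $\leq i-1$ in the first $c$ variables directly; you instead peel off one associated graded piece at a time via the sequence $0\to I^{i-1}/I^i\to R/I^i\to R/I^{i-1}\to 0$ and reassemble the count with Pascal's identity. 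Your version is the filtered form of their direct decomposition (summing your free summands over $j=0,\dots,i-1$ recovers exactly their tensor description), so it costs an induction and the identification of $I^{i-1}/I^i$ as a free $R/I$-module, but it generalizes more readily (e.g.\ to settings where only the associated graded is understood). One small caveat: your parenthetical alternative justification via ``$\Tor_1^R(R/I,R/I)=0$'' is not correct as stated --- for the Koszul resolution of $R/I$ one gets $\Tor_1^R(R/I,R/I)\cong I/I^2\cong (R/I)^c\neq 0$; the relevant fact is rather that a regular sequence is quasi-regular, so $\mathrm{Sym}_{R/I}(I/I^2)\to\bigoplus_j I^j/I^{j+1}$ is an isomorphism. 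This does not affect your proof, since your primary verification (exhibiting the monomial basis of $I^{i-1}/I^i$ over $\K[x_{c+1},\dots,x_k]$) is correct and sufficient.
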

\begin{proof}
 After a linear change of variables, we can assume that $I=\langle x_1,\dots,x_c\rangle$. So, $I^i$ is generated by all monomials in the first $c$ variables of total degree $i$. In the quotient, we have
 \[
  \frac{R}{I^i}\simeq (\K[x_1,\dots,x_c])_{\leq i-1}\otimes_{\K} \K[x_{c+1},\dots,x_k],
 \]
as graded vector spaces which leads to the above formula as $\dim  (\K[x_1,\dots,x_c])_{\leq i-1}=\binom{c+i-1}{c}$.
\end{proof}

\begin{lem}\label{degreeFatSchemes} Suppose $\mathfrak p_1,\ldots,\mathfrak p_s$ are distinct homogeneous prime ideals in $R=\mathbb K[x_1,\ldots,x_k]$ of the same height $c\leq k-1$. Let $n_1,\ldots,n_s\geq 1$ be integers. Then $$\deg(\mathfrak p_1^{n_1}\cap\cdots\cap\mathfrak p_s^{n_s})=\sum_{i=1}^s\deg(\mathfrak p_i^{n_i}).$$
\end{lem}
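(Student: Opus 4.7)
The natural strategy is induction on $s$, using \lemref{HilbPoly}(2) as the engine. The base case $s=1$ is vacuous.

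For the inductive step, set
\[
J:=\mathfrak p_1^{n_1}\cap\cdots\cap\mathfrak p_{s-1}^{n_{s-1}}, \qquad K:=\mathfrak p_s^{n_s},
\]
so that $I=J\cap K$. First I would check the height hypotheses of \lemref{HilbPoly}(2). The minimal primes of $J$ are precisely $\mathfrak p_1,\ldots,\mathfrak p_{s-1}$ and the unique minimal prime of $K$ is $\mathfrak p_s$; since all of these have height $c$, we get ${\rm ht}(J)={\rm ht}(K)=c$, and trivially ${\rm ht}(I)=c$ as well.

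The one thing that requires a small argument is the verification that ${\rm ht}(J+K)>c$. Let $\mathfrak q$ be any minimal prime over $J+K$. Since $\mathfrak q\supseteq J=\bigcap_{i<s}\mathfrak p_i^{n_i}$ and $\mathfrak q$ is prime, $\mathfrak q$ contains some $\mathfrak p_i^{n_i}$ and hence $\mathfrak p_i$ for some $i<s$. Simultaneously, $\mathfrak q\supseteq K$ gives $\mathfrak q\supseteq\mathfrak p_s$. But $\mathfrak p_i$ and $\mathfrak p_s$ are \emph{distinct} primes of the same height $c$, so neither contains the other; consequently $\mathfrak q$ strictly contains $\mathfrak p_s$, forcing ${\rm ht}(\mathfrak q)\geq c+1$. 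As $\mathfrak q$ was an arbitrary minimal prime over $J+K$, we conclude ${\rm ht}(J+K)>c$.

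With both hypotheses of \lemref{HilbPoly}(2) satisfied, it yields
\[
\deg(I)=\deg(J)+\deg(K)=\deg(J)+\deg(\mathfrak p_s^{n_s}),
\]
and the inductive hypothesis applied to $J$ finishes the proof. I expect no real obstacle here: the only nontrivial point is the height argument for $J+K$, which rests cleanly on the distinctness and equi-codimensionality of the $\mathfrak p_i$.
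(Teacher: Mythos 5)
Your proof is correct and follows essentially the same route as the paper: induction on $s$ combined with Lemma~\ref{HilbPoly}(2), with the crux being the verification that ${\rm ht}(J+K)>c$. The only (immaterial) difference is that you establish this by examining a minimal prime $\mathfrak q$ over $J+K$ directly, whereas the paper passes to radicals and derives the contradiction $\mathfrak p_i\subseteq\mathfrak p_1$; both arguments rest on the same fact that distinct primes of equal height cannot contain one another.
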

\begin{proof} We use induction on $s\geq 1$, with the $s = 1$ case being clear.

Let $s>1$, and let us denote $J:=\mathfrak p_1^{n_1}$ and $K:=\mathfrak p_2^{n_2}\cap\cdots\cap\mathfrak p_s^{n_s}$, and $I:=J\cap K$. If we show that ${\rm ht}(J+K)>c$, then with Lemma \ref{HilbPoly} (2) and from induction hypotheses we will be done.

Suppose by contradiction that ${\rm ht}(J+K)=c$. Since ${\rm ht}(J+K)={\rm ht}(\sqrt{J+K})$, and since $\sqrt{J+K}=\sqrt{\sqrt{J}+\sqrt{K}}$, we have $${\rm ht}(\mathfrak p_1+\mathfrak p_2\cap\cdots\cap\mathfrak p_s)=c.$$ Since $\mathfrak p_1\subseteq \mathfrak p_1+\mathfrak p_2\cap\cdots\cap\mathfrak p_s$, with ${\rm ht}(\mathfrak p_1)=c$, we must have $\mathfrak p_1=\mathfrak p_1+\mathfrak p_2\cap\cdots\cap\mathfrak p_s$, and so $$\mathfrak p_2\cap\cdots\cap\mathfrak p_s\subseteq\mathfrak p_1.$$ Hence some $\mathfrak p_i \subseteq \mathfrak p_1$, contradicting that they are distinct and of the same height.
\end{proof}

Recall that $c_{i,j}$ is the coefficient of $x^iy^j$ in $T_{\mathcal C}(x+1,y)$ and $p_r=\max \{ j: c_{r,j}\neq 0\}$. The main result of these notes is the following theorem.

\begin{thm}\label{main} Let $\C$ be an $[n,k]$-linear code in $R:=\mathbb K[x_1,\ldots,x_k]$. Let $a=d_r(\C)+j$, where $r=0,\dots, k$ and $j=1,\dots, d_{r+1}(\C)-d_r(\C)$. Then the degree of $I_a(\C)$ is determined by the coefficients of the Tutte polynomial:
\[
 \deg(I_a(\C))=\sum_{t=0}^{j-1} c_{r,p_{r}-t}.
\]
\end{thm}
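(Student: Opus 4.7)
The plan is to reduce $\deg(I_a(\C))$ to a sum indexed by rank-$(k-r)$ flats of $\M(\C)$ via the primary decomposition of Proposition \ref{primarydecomp}, and then match this against $\sum_{t=0}^{j-1} c_{r,p_r-t}$ through a per-flat combinatorial identity whose essence is a ``universality'' statement for the top coefficients of $T_{\M'}(1,y)$.

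\smallskip

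\noindent\textbf{Reduction.} By Proposition \ref{primarydecomp} together with Lemma \ref{HilbPoly}(1) (which discards the higher-height component $K$), Lemma \ref{degreeFatSchemes}, and Lemma \ref{HPpower},
\[
  \deg(I_a(\C))=\sum_{\mathfrak p\in \mathrm{Min}_{k-r}(I_a(\C))}\binom{(k-r)+(a-n+\nu(\mathfrak p))-1}{k-r}.
\]
By Remark \ref{connection}, $\mathrm{Min}_{k-r}(I_a(\C))$ is in bijection with the flats $F$ of $\M(\C)$ of rank $k-r$ with $|F|\geq n-a+1$, via $\mathfrak p_F=\langle \ell_i:i\in F\rangle$ and $\nu(\mathfrak p_F)=|F|$. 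Setting $\sigma_F:=|F|-(k-r)$ and $\tau_F:=p_r-\sigma_F$, and using $d_r(\C)+(k-r)+p_r=n$ from Lemma \ref{lemma:Tutte}, the exponent simplifies to $a-n+|F|=j-\tau_F$, giving
\[
  \deg(I_a(\C))=\sum_{\substack{F \text{ flat},\\ r(F)=k-r,\ \tau_F\leq j-1}}\binom{(j-\tau_F)+(k-r)-1}{k-r}.
\]

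\smallskip

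\noindent\textbf{Per-flat identity and the matroid lemma.} Using the flat expansion $T_\C(x+1,y)=\sum_{F\text{ flat}} x^{k-r(F)}Q_F(y)$ with $Q_F(y):=T_{\M|F}(1,y)$, we have $c_{r,j'}=\sum_{F:r(F)=k-r}[y^{j'}]Q_F(y)$. I claim that for each flat $F$ of rank $k-r$ and each $t\in\{0,\ldots,p_r-p_{r+1}\}$,
\[
  [y^{p_r-t}]Q_F(y)=\binom{(t-\tau_F)+(k-r)-1}{(k-r)-1}
\]
(using the convention $\binom{a}{b}=0$ when $a<b$, which correctly gives zero when $t<\tau_F$). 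This is the main obstacle, and follows from the following lemma: \emph{for a loopless matroid $\M'$ on $n$ elements of rank $\rho$, letting $p^*(\M')$ be the maximum nullity over all rank-$(\rho-1)$ subsets of $\M'$, one has $[y^s]T_{\M'}(1,y)=\binom{n-s-1}{\rho-1}$ for $p^*(\M')\leq s\leq n-\rho$}. To prove this, expand $[y^s]T_{\M'}(1,y)=\sum_m \beta_m \binom{m}{s}(-1)^{m-s}$, where $\beta_m$ counts the $(\rho+m)$-subsets of $\M'$ of full rank; a non-spanning subset must lie inside some hyperplane of size at most $(\rho-1)+p^*(\M')$, so $\beta_m=\binom{n}{\rho+m}$ whenever $m\geq p^*(\M')$, matching the uniform matroid $U_{\rho,n}$. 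Since $\binom{m}{s}$ vanishes whenever $m<p^*(\M')\leq s$, the full sum reduces to the uniform case, whose closed form $\binom{n-s-1}{\rho-1}$ follows from Chu--Vandermonde.

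\smallskip

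\noindent\textbf{Assembly.} Apply the lemma to $\M'=\M|F$ with $n=|F|=(k-r)+\sigma_F$. Since every rank-$(k-r-1)$ subset of $F$ is also such in $\M(\C)$, we have $p^*(\M|F)\leq p_{r+1}$, and the hypothesis $t\leq p_r-p_{r+1}$ forces $s=p_r-t\geq p_{r+1}\geq p^*(\M|F)$, so the per-flat identity holds. Summing over $t=0,\ldots,j-1$ and using the hockey-stick identity,
\[
  \sum_{t=0}^{j-1}[y^{p_r-t}]Q_F(y)=\binom{(j-\tau_F)+(k-r)-1}{k-r},
\]
which is exactly the contribution of $F$ to $\deg(I_a(\C))$ obtained in the reduction step. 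Summing over rank-$(k-r)$ flats of $\M(\C)$ completes the proof.
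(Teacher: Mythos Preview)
Your proof is correct and takes a genuinely different route from the paper's argument.

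After the same reduction to a sum over minimal primes, the paper proceeds by induction on $j$: it shows $\deg(I_{a+1}(\C))-\deg(I_a(\C))=c_{r,p_r-j}$ by tracking which height-$(k-r)$ primes are gained as $a$ increases, and then reduces the remaining discrepancy to a binomial identity (equation~\eqref{desired_formula}) which it proves by a somewhat lengthy double induction on the parameters $(\beta,\gamma)$.

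You instead decompose both sides flat by flat. The identity $T_\C(x+1,y)=\sum_{F\text{ flat}} x^{k-r(F)}T_{\M|F}(1,y)$ writes $c_{r,p_r-t}$ as a sum over rank-$(k-r)$ flats, and your matroid lemma---that the high-degree coefficients of $T_{\M'}(1,y)$ coincide with those of the uniform matroid $U_{\rho,|\M'|}$ whenever the exponent exceeds the maximal hyperplane nullity---identifies each flat's contribution explicitly. The bound $p^*(\M|F)\le p_{r+1}$ (which follows because rank-$(k-r-1)$ subsets of $F$ are rank-$(k-r-1)$ in $\M$) is exactly what makes the lemma applicable in the range $t\le j-1\le p_r-p_{r+1}$. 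A hockey-stick identity then matches each flat's Tutte contribution to its degree contribution. This is more conceptual: it explains \emph{why} the formula holds (each minimal-height primary component behaves, for degree purposes, like the uniform matroid on its flat) and replaces the paper's ad~hoc binomial identity with standard tools (Chu--Vandermonde and hockey-stick). The paper's approach, on the other hand, is entirely self-contained and does not require knowing the flat expansion of the Tutte polynomial.

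One small remark: Lemmas~\ref{HilbPoly}, \ref{HPpower}, \ref{degreeFatSchemes} are stated only for height $c\le k-1$, so strictly speaking the case $r=0$ (where $I_a(\C)=\mathfrak m^a$ has height $k$) needs a separate sentence; the paper treats $r=0$ separately for the same reason. Your per-flat argument handles it uniformly once one notes $\deg(\mathfrak m^a)=\binom{k+a-1}{k}$ directly.
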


\begin{proof} First suppose $r\geq 1$. From Proposition \ref{primarydecomp}, and from Lemma \ref{HilbPoly} (1), we have $$\deg(I_a(\C))=\deg(\mathfrak p_1^{a-n+\nu(\mathfrak p_1)}\cap\cdots\cap \mathfrak p_s^{a-n+\nu(\mathfrak p_s)}),$$ where $\{\mathfrak p_1,\ldots,\mathfrak p_s\}={\rm Min}_{k-r}(I_a(\C))$.

Lemma \ref{degreeFatSchemes} gives that $$\deg(I_a(\C))=\sum_{i=1}^s\deg(\mathfrak p_i^{a-n+\nu(\mathfrak p_i)}),$$ which in turn, from Lemma \ref{HPpower}, gives $$\deg(I_a(\C))=\sum_{i=1}^s\binom{\nu(\mathfrak p_i)-n+a+k-r-1}{k-r}.$$ where $p_{r}=n-d_r(\C)-k+r$ is the number defined in Lemma \ref{lemma:Tutte}.

If $j=1$, then $a=d_r(\C)+1$. Let $\mathfrak p\in {\rm Min}_{k-r}(I_a(\C))$. Since $n-d_r(\C)\geq \nu(\mathfrak p)=|\rho(\mathfrak p)|\geq n-a+1=n-d_r(\C)$, then $\nu(\mathfrak p)=n-d_r(\C)=n-a+1$. In the above formula we have $\deg(I_a(\C))$ equals $s$, the number of minimal primes of $I_a(\C)$ of height $k-r$. Also we have $\rho(\mathfrak p)$ is a flat of rank $k-r$ of maximum possible size $n-d_r(\C)=n-a+1$. Remark \ref{connection}, paired with Proposition \ref{C_I} gives that the number of minimal primes of $I_a(\C)$ of height $k-r$ equals $c_{r,p_r}$.

We prove the claimed formula by induction on $1\leq j\leq d_{r+1}(\C)-d_r(\C)-1$. The base case $j=1$, has been shown above. We must show that
$$\deg(I_{a+1}(\C))-\deg(I_a(\C))=c_{r,p_r-j}.$$

The left-hand side equals $$\sum_{\mathfrak p\in {\rm Min}_{k-r}(I_{a+1}(\C))}\binom{\nu(\mathfrak p)-n+a+k-r}{k-r}- \sum_{\mathfrak p\in {\rm Min}_{k-r}(I_{a}(\C))}\binom{\nu(\mathfrak p)-n+a+k-r-1}{k-r}.$$ As $I_{a+1}(\C)\subset I_a(\C)$, and as ${\rm ht}(I_a(\C))={\rm ht}(I_{a+1}(\C))=k-r$, we have ${\rm Min}_{k-r}(I_{a}(\C))\subseteq {\rm Min}_{k-r}(I_{a+1}(\C))$.

In fact, if $\mathfrak p\in {\rm Min}_{k-r}(I_{a+1}(\C))\setminus {\rm Min}_{k-r}(I_{a}(\C))$, then $\nu(\mathfrak p)=n-a$. Indeed, $\mathfrak{p} \in {\rm Min}_{k-r}(I_{a+1}(\C))$ implies that $\nu(\mathfrak{p}) \geq n-(a+1)+1 = n-a$. If we further had $\nu(\mathfrak{p}) \geq n-a+1$, then by definition $|\rho(\mathfrak{p})| \geq n-a+1$. Since the rank of $\rho(\mathfrak{p})$ is $k-r$, we have by Remark \ref{connection} that $\mathfrak{p} = \mathfrak{p}_{\rho(\mathfrak{p})} \in {\rm Min}_{k-r}(I_a(\C))$.

So, the left-hand side becomes $$|{\rm Min}_{k-r}(I_{a+1}(\C))\setminus {\rm Min}_{k-r}(I_{a}(\C))|+\sum_{\mathfrak p\in {\rm Min}_{k-r}(I_{a}(\C))}\binom{\nu(\mathfrak p)-n+a+k-r-1}{k-r-1}.$$

After identifying $p_r-j=n-a-k+r$ and saving $x^r$, we have that $c_{r,p_r-j}$ is the coefficient of $y^{n-a-k+r}$ in $$\sum_{I\subseteq[n], r(I)=k-r}(y-1)^{|I|-k+r}.$$

We need to investigate those $I$'s of rank $k-r$ such that $|I|-k+r\geq n-a-k+r$, or simply $|I|\geq n-a$. In fact, if $|I|=n-a=|cl(I)|$ and $r(I)=k-r$, then $\langle \ell_i|i\in I\rangle\in {\rm Min}_{k-r}(I_{a+1}(\C))\setminus {\rm Min}_{k-r}(I_{a}(\C))$.

So the right-hand side of our claimed formula becomes $$|{\rm Min}_{k-r}(I_{a+1}(\C))\setminus {\rm Min}_{k-r}(I_{a}(\C))|+\Delta,$$ where $\Delta$ is the coefficient of $y^{n-a-k+r}$ in $$\sum_{\mathfrak p\in {\rm Min}_{k-r}(I_{a}(\C))}\left(\sum_{I\in \Lambda(\mathfrak p)}(y-1)^{|I|-k+r}\right),$$ where for $\mathfrak p\in {\rm Min}_{k-r}(I_{a}(\C))$, we define $\Lambda(\mathfrak p):=\{I\subseteq[n]:\mathfrak p=\langle \ell_i|i\in cl(I)\rangle\mbox{ and }|I|\geq n-a\}$.

Let $\mathfrak p=\langle \ell_{i_1},\ldots,\ell_{i_{\nu(\mathfrak p)}}\rangle\in {\rm Min}_{k-r}(I_{a}(\C))$, and let $I\subset \{i_1,\ldots,i_{\nu(\mathfrak p)}\}$, with $|I|\geq n-a$. The prime ideal $\langle \ell_i|i\in I\rangle$ contains the ideal $I_{a+1}(\C)$, whose height is $k-r$. So $r(I)=k-r$.

This gives that $\Delta$ is the coefficient of $y^{n-a-k+r}$ in $$\sum_{\mathfrak p\in {\rm Min}_{k-r}(I_{a}(\C))}\left(\sum_{u=n-a}^{\nu(\mathfrak p)}\binom{\nu(\mathfrak p)}{u}(y-1)^{u-k+r}\right).$$ This implies that it is enough to show that for each $\mathfrak p\in {\rm Min}_{k-r}(I_{a}(\C))$, one has $$\binom{\nu(\mathfrak p)-n+a+k-r-1}{k-r-1}=\sum_{u=n-a}^{\nu(\mathfrak p)}(-1)^{u-n+a}\binom{\nu(\mathfrak p)}{u}\binom{u-k+r}{u-n+a}.$$

Denote $\alpha:=\nu(\mathfrak p),\beta:=n-a,\gamma:=k-r$. Then we have
$\alpha>\beta\geq\gamma\geq 1$. By induction on pairs $(\beta,\gamma)$ with
$\beta \geq \gamma \geq 1$ we prove the desired formula
\begin{equation}\label{desired_formula}
  \binom{\alpha-\beta+\gamma-1}{\gamma-1}=\sum_{u=\beta}^{\alpha}(-1)^{u-\beta}\binom{\alpha}{u}\binom{u-\gamma}{u-\beta}.
\end{equation}

$\bullet$ If $\beta=\gamma$, then we have to show $$\binom{\alpha-1}{\beta-1}=\sum_{u=\beta}^{\alpha}(-1)^{u-\beta}\binom{\alpha}{u}.$$ But this is a well-known formula following from the fact that
\begin{eqnarray}
\binom{\alpha-1}{\beta-1}&=&\binom{\alpha}{\beta}-\binom{\alpha-1}{\beta}=\binom{\alpha}{\beta}-\left(\binom{\alpha}{\beta+1} -\binom{\alpha-1}{\beta+1}\right)\nonumber\\
&=&\binom{\alpha}{\beta}-\binom{\alpha}{\beta+1}+\left(\binom{\alpha}{\beta+2} -\binom{\alpha-1}{\beta+2}\right) =\cdots.\nonumber
\end{eqnarray}

$\bullet$ If $\gamma = 1$, then we must show
  \[
    S_{\beta} :=
    \sum_{u=\beta}^{\alpha}(-1)^{u-\beta}\binom{\alpha}{u}\binom{u-1}{u-\beta}
    = 1.
  \]
We prove this formula by induction on $\beta \geq 1$. The $\beta = 1$ case
gives $\beta = \gamma$, which was addressed previously. Now, assume the result
holds for $\beta \geq 1$. Then the inductive hypothesis yields
  \begin{align*}
    1 &=
    \sum_{u=\beta}^{\alpha}(-1)^{u-\beta}\binom{\alpha}{u}\binom{u-1}{u-\beta}
    \\
    &= \binom{\alpha}{\beta} - \sum_{u =
      \beta+1}^{\alpha}{(-1)^{u-\beta+1}\binom{\alpha}{u}\binom{u-1}{u-\beta}}
    \\
    &= \binom{\alpha}{\beta} - \sum_{u =
      \beta+1}^{\alpha}{(-1)^{u-\beta+1}\binom{\alpha}{u}\left(\binom{u}{u-\beta}-\binom{u-1}{u-\beta-1}\right)}
    \\
    &= \binom{\alpha}{\beta} - \sum_{u =
      \beta+1}^{\alpha}{(-1)^{u-\beta+1}\binom{\alpha}{u}\binom{u}{u-\beta}}
    + S_{\beta+1}.
  \end{align*}
Thus it suffices to show that
  \[
    \sum_{u =
      \beta+1}^{\alpha}{(-1)^{u-\beta+1}\binom{\alpha}{u}\binom{u}{u-\beta}} =
    \binom{\alpha}{\beta}.
  \]
Using the fact that $\binom{u}{u-\beta} = \binom{u}{\beta}$ together with the
binomial identity $\binom{\alpha}{u}\binom{u}{\beta} =
\binom{\alpha}{\beta}\binom{\alpha-\beta}{u-\beta}$, the equation above
reduces to
  \begin{align*}
    \sum_{u =\beta+1}^{\alpha}{(-1)^{u-\beta+1}\binom{\alpha-\beta}{u-\beta}} = 1,
  \end{align*}
which follows immediately from the fact that the alternating sum of binomial
coefficients vanishes.

$\bullet$ We are now ready to prove (\ref{desired_formula}) by induction. We
prove it is true for $\beta+1$, assuming it is true for any $\beta'$ with
$\gamma \leq \beta' \leq \beta$. First we
have
$$\binom{\alpha-(\beta+1)+\gamma-1}{\gamma-1}=\binom{\alpha-\beta+\gamma-2}{\gamma-1}=\binom{\alpha-\beta+\gamma-1}{\gamma-1}-
\binom{\alpha-\beta+\gamma-2}{\gamma-2}.$$
Next we use the induction hypothesis on each of the terms of the difference
above to obtain
\begin{eqnarray}
&=&\sum_{u=\beta}^{\alpha}(-1)^{u-\beta}\binom{\alpha}{u}\binom{u-\gamma}{u-\beta}-\sum_{u=\beta}^{\alpha}(-1)^{u-\beta} \binom{\alpha}{u}\binom{u-(\gamma-1)}{u-\beta}\nonumber\\
&=& \sum_{u=\beta}^{\alpha}(-1)^{u-\beta}\binom{\alpha}{u}\left(\binom{u-\gamma}{u-\beta} -\binom{u-(\gamma-1)}{u-\beta}\right).\nonumber
\end{eqnarray}
For $u=\beta$, the corresponding term in the sum is zero, so we can start the sum from $\beta+1$. We obtain $$=\sum_{u=\beta+1}^{\alpha}(-1)^{u-\beta}\binom{\alpha}{u}\left(-\binom{u-\gamma}{u-\beta-1}\right)= \sum_{u=\beta+1}^{\alpha}(-1)^{u-(\beta+1)}\binom{\alpha}{u}\binom{u-\gamma}{u-(\beta+1)}.$$ And we are done.

If $r=0$, then $I_a(\C)=\mathfrak m^a$ (see the note after the proof of Proposition \ref{primarydecomp}). Therefore, $\deg(I_a(\C))=\binom{k+a-1}{k}$. Then, $p_0=n-k$, and $c_{0,p_0-t}$ is the coefficient of $y^{n-k-t}$ in
\[
 \displaystyle \sum_{I\subset[n], r(I)=k}(y-1)^{|I|-t}.
\]
 Similar, yet simpler combinatorics as in the previous argument will prove the claimed formula for the case $r=0$.
\end{proof}

\begin{ex}(\cite{GrSt}) Consider the code $\C=(x_1,x_2,x_3,x_1+x_2,x_1-x_2,x_1+x_3,x_1-x_3,x_2+x_3,x_2-x_3)\subset\mathbb K[x_1,x_2,x_3]$ which corresponds to the $B_3$ root system, with $n=9$, $k=3$ and ${\rm char} (\K)\neq 2$. We have
\[
 T_{\C}(x+1,y)=y^{6}+3 y^{5}+6 y^{4}+x^{3}+ 3 x y^{2}+10 y^{3}+9 x^{2}+10 x y+15 y^{2}+23 x+18 y+15.
\] Evidently, $p_0=6, p_1=2, p_2=0$ and $p_3=0$. Therefore, the generalized Hamming weights are $d_0=0, d_1=9-2-3+1=5, d_2=9-0-3+2=8$ and $d_3=9-0+3-3=9$.

Analyzing the $a$-fold products of linear forms of $\C$, we obtain,
\begin{eqnarray*}
   &&\hi (I_1(\C)) = \hi (I_2(\C))=\hi (I_3(\C))=\hi (I_4(\C))=\hi (I_{\underline{\bf 5}}(\C))=3,\\
   &&\hi (I_6(\C)) =  \hi (I_7(\C))=\hi (I_{\underline{\bf 8}}(\C))=2,\\
   &&\hi (I_{\underline{\bf 9}}(\C)) =  1.
\end{eqnarray*}
The degrees and the Hilbert polynomials of the ideals are summarized below.
\[
\begin{array}{|c||c|c|c|c|c|}
\hline
 i & 1 & 2 & 3 & 4 & 5 \\
 \hline
 \deg(I_i(\C)) & 1 & 1+3=4 & 1+3+6=10 &  1+3+6+10=20 & 1+3+6+10+15=35\\
 \hline
\end{array}
\]
\[
\begin{array}{|c||c|c|c|c|}
\hline
 i & 6 & 7 & 8 & 9 \\
 \hline
 HP(R/I_i(\C)) & 3 P_0 & (3+10)P_0 & (3+10+23)P_0 & 9 P_1-36 P_0 \\
 \hline
\end{array}
\]
\end{ex}

Another homological invariant of interest is the minimum number of generators of $I_a(\C)$, often denoted by $\mu(I_a(\C))$. This number can be also read from the coefficients of the Tutte polynomial.

\begin{prop}\label{mingens} Let $\C$ be an $[n,k]-$linear code. Then, with the previous notations, for any $a\in\{1,\ldots,n\}$, one has $$\mu(I_a(\C))=\sum_{u=0}^{\min\{k,n-a\}}c_{k-u,n-a-u}.$$
\end{prop}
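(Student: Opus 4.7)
Since $I_a(\C)$ is generated in the single degree $a$ by the $a$-fold products, we have $\mu(I_a(\C))=\dim_{\K}(I_a(\C))_a$, and the task reduces to a vector-space dimension count. Substituting $T_\C(x+1,y)=\sum_{I\subseteq[n]}x^{k-r(I)}(y-1)^{|I|-r(I)}$ and extracting coefficients, the identity to be proved can be rewritten as
\[
\dim_{\K}(I_a(\C))_a \;=\; \Phi(\C,a) \;:=\; \sum_{I\subseteq[n]}(-1)^{|I|-n+a}\binom{|I|-r(I)}{|I|-n+a},
\]
where only subsets with $|I|\geq n-a$ and $r(I)\leq n-a$ contribute.

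The plan is to induct on $n$ via matroid deletion/contraction. Fix any $e\in[n]$ (not a loop, since $\C$ is loopless), and split $\Phi(\C,a)$ according to whether $e\in I$. For $I\not\ni e$, the reindexing $|I|-n+a=|I|-(n-1)+(a-1)$ identifies the partial sum with $\Phi(\C\setminus e,\,a-1)$; for $I=I'\cup\{e\}$, using $r_{\C/e}(I')=r(I'\cup\{e\})-1$ identifies it with $\Phi(\C/e,\,a)$. Hence
\[
\Phi(\C,a) \;=\; \Phi(\C\setminus e,\,a-1) + \Phi(\C/e,\,a),
\]
which is the coefficient-level manifestation of the Tutte recurrence $T_\C=T_{\C\setminus e}+T_{\C/e}$.

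To match this algebraically, I start from $I_a(\C)=\ell_e I_{a-1}(\C\setminus e)+I_a(\C\setminus e)$, which in degree $a$ gives
\[
\dim(I_a(\C))_a \;=\; \dim(I_{a-1}(\C\setminus e))_{a-1}+\dim(I_a(\C\setminus e))_a-\dim\Lambda,
\]
with $\Lambda=\ell_e(I_{a-1}(\C\setminus e))_{a-1}\cap(I_a(\C\setminus e))_a$. The key lemma will be
\[
\dim(I_a(\C\setminus e))_a-\dim\Lambda \;=\; \dim(I_a(\C/e))_a,
\]
which I would prove via the ring surjection $R\twoheadrightarrow\widetilde R=R/(\ell_e)$: the image of $(I_a(\C\setminus e))_a$ in $\widetilde R_a$ is $(I_a(\C/e))_a$ (any loops of $\C/e$ arising from $\ell_i\in\langle\ell_e\rangle$ contribute zero and are harmless), so by rank-nullity the kernel has the right dimension provided it coincides with $\Lambda$. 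Combined with the inductive hypothesis applied to $\C\setminus e$ and $\C/e$ and the trivial base case $n=0$, this closes the induction.

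The main obstacle is the identification of that kernel with $\Lambda$, equivalently the inclusion $(I_a(\C\setminus e):\ell_e)_{a-1}\subseteq(I_{a-1}(\C\setminus e))_{a-1}$: whenever $\ell_e g\in I_a(\C\setminus e)$ for some $g\in R_{a-1}$, $g$ already lies in $I_{a-1}(\C\setminus e)$. I expect this to follow from a careful analysis via Proposition~\ref{primarydecomp}: each minimal prime $\mathfrak p$ of $I_a(\C\setminus e)$ is generated by a regular sequence of linear forms, so the colon satisfies $(\mathfrak p^m:\ell_e)=\mathfrak p^{m-1}$ when $\ell_e\in\mathfrak p$ and is unchanged otherwise, and then a careful accounting of these modified primary components against those of $I_{a-1}(\C\setminus e)$ (together with handling of the higher-height component $K$) should deliver the desired containment.
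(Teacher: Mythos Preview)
Your deletion--contraction strategy is precisely the one underlying Berget's Theorem~1.1 in \cite{Be}, which the paper's proof simply cites (together with the identification $(I_a(\C))_a=\bigoplus_u P(\C)_{u,n-a}$ from \cite[Remark 2.3]{GaTo}). So conceptually you are not taking a different route; you are attempting to re-prove Berget's result from scratch. Your recurrence $\Phi(\C,a)=\Phi(\C\setminus e,a-1)+\Phi(\C/e,a)$ is correct, and the matching recurrence on $\dim(I_a(\C))_a$ is exactly the degree-$a$ piece of Berget's short exact sequence (his Lemma~6.2). The ``main obstacle'' you isolate, $(I_a(\C\setminus e):\ell_e)_{a-1}\subseteq (I_{a-1}(\C\setminus e))_{a-1}$, is equivalent to exactness in the middle of that sequence; note that the paper's Remark~\ref{Berget} derives this equality \emph{from} Berget's lemma, not the other way around.

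The genuine gap is your proposed method for overcoming this obstacle. Proposition~\ref{primarydecomp} describes only the primary components of minimal height; the residual ideal $K$ of larger height is entirely uncontrolled, so coloning component-by-component tells you nothing about what $K:\ell_e$ looks like in degree $a-1$. The Appendix carries out exactly the computation you sketch: Proposition~\ref{recdegree} colons the primary decomposition by $\ell$ and obtains only a \emph{degree} equality $\deg(I_a(\C):\ell)=\deg(I_{a-1}(\C'))$ (and even that fails when $j=1$ and $p_r=p_r'+1$, because then everything depends on $K$). The full ideal equality is the open Conjecture~\ref{conj2}. Your needed inclusion is the degree-$(a-1)$ graded piece of a colon-ideal statement of the same nature, and primary decomposition does not resolve individual graded pieces. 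To close the argument you would need an independent proof of Berget's Lemma~6.2; once you cite Berget, your proof collapses to the paper's.
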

\begin{proof} The ideal $I_a(\C)$ is generated in degree $a$; therefore, the minimum number of generators is $\mu(I_a(\C))=\dim_{\mathbb K} (I_a(\C))_a$.

Berget, in \cite{Be}, constructs the following graded vector space. Suppose $\displaystyle \mathcal C=(\ell_1,\ldots,\ell_n)\subset R:=\mathbb K[x_1,\ldots,x_k]$. For any $I\subset [n]$, denote $\displaystyle\ell_I=\Pi_{i\in I}\ell_i$, with the convention that $\ell_{\emptyset}=1$. Let $P(\mathcal C)$ be the $\mathbb K$-vector subspace of $R$ spanned by $\ell_I$, for all $I\subset [n]$. Then one has a decomposition:
	\[
	P(\mathcal C)=\bigoplus_{0\leq u\leq v\leq n}P(\mathcal C)_{u,v},
	\]
	where
	\[
	P(\mathcal C)_{u,v}={\rm Span}_{\mathbb K}\{\ell_I|\dim_{\mathbb K}({\rm Span}_{\mathbb K}\{\ell_j,j\in[n]\setminus I\})=u\mbox{ and }v=n-|I|\}.
	\] We have $0\leq u\leq k$ since ${\rm Span}_{\mathbb K}\{\ell_j,j\in[n]\setminus I\})$ is a subspace of $\mathbb K^k$.
	
As observed in \cite[Remark 2.3]{GaTo}, $\displaystyle\bigoplus_{0\leq u\leq n-a}P(\mathcal C)_{u,n-a}=(I_a({\mathcal C}))_{a}$. Therefore $$\mu(I_a(\C))=\sum_{u=0}^{n-a}\dim_{\mathbb K}P(\mathcal C)_{u,n-a}.$$

The main result of \cite{Be} is Theorem 1.1 which says that $$T_{\C}(x,y)=\sum_{0\leq u\leq v\leq n}(x-1)^{k-u}y^{v-u}\dim_{\mathbb K}P(\C)_{u,v}.$$

In our notations, $\dim_{\mathbb K}P(\C)_{u,v}=c_{k-u,v-u}$, and hence the claimed formula.
\end{proof}

\begin{ex} Let $\C$ be the same as in the above example. The following calculations were checked in \cite{GrSt}.

\begin{eqnarray*}
\mu(I_1(\C))&=&c_{3,8}+c_{2,7}+c_{1,6}+c_{0,5}=0+0+0+3=3\\
\mu(I_2(\C))&=&c_{3,7}+c_{2,6}+c_{1,5}+c_{0,4}=0+0+0+6=6\\
\mu(I_3(\C))&=&c_{3,6}+c_{2,5}+c_{1,4}+c_{0,3}=0+0+0+10=10\\
\mu(I_4(\C))&=&c_{3,5}+c_{2,4}+c_{1,3}+c_{0,2}=0+0+0+15=15\\
\mu(I_5(\C))&=&c_{3,4}+c_{2,3}+c_{1,2}+c_{0,1}=0+0+3+18=21\\
\mu(I_6(\C))&=&c_{3,3}+c_{2,2}+c_{1,1}+c_{0,0}=0+0+10+15=25\\
\mu(I_7(\C))&=&c_{3,2}+c_{2,1}+c_{1,0}=0+0+23=23\\
\mu(I_8(\C))&=&c_{3,1}+c_{2,0}=0+9=9\\
\mu(I_9(\C))&=&c_{3,0}=1
\end{eqnarray*}
\end{ex}

\section{Appendix: A conjecture about the graded minimal free resolution of \texorpdfstring{$I_a(\C)$}.}

Throughout this section we assume that $\mathbb K$ is a field of characteristic 0. Let $\mathcal C$ be an $[n,k]-$linear code over $\mathbb K$, with generating matrix $G$, of size $k\times n$ and of rank $k$, and with no zero columns. Let $\ell_i$ be the linear form in $R:=\mathbb K[x_1,\ldots,x_k]$, dual to the column $i$ of $G$. As we did before, we will abuse notation by saying that $\C=(\ell_1,\ldots,\ell_n)\subset R$. Throughout this section the notation $\C\subset R$ will mean an $[n,k]-$linear code $\C$ satisfying these conditions. For $1\leq a\leq n$, let $I_a(\C)$ be the ideal generated by all $a-$fold products of the linear forms in $\C$.

Our ultimate goal in studying the homological properties of the ideals $\{I_a(\C)\}_a$ has been focused on the following conjecture:

\begin{conj}\label{conj1}
 For any $\mathcal C\subset R$ and any $1\leq a\leq n$, the ideal $I_a(\C)$ has a linear graded free resolution. Or equivalently, the Castelnuovo-Mumford regularity, ${\rm reg}(R/I_a(\C))$, equals $a-1$. (see \cite{EiGo}).
\end{conj}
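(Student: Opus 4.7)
The plan is to prove the conjecture by induction on the length $n$ of the code, using the deletion structure of the underlying matroid together with the partial primary decomposition of Proposition~\ref{primarydecomp}. Several boundary cases are already settled: when $a\leq d_1(\C)$, \cite[Theorem 3.1]{To1} gives $I_a(\C)=\mathfrak m^a$ with its Koszul resolution, which is linear; when $\C$ is MDS, the Eagon--Northcott resolution referenced in \cite[Remark 2.11]{GHM} is linear; when $a=n$, $I_n(\C)$ is principal; and when $n=k$, so that every $\ell_i$ is a coloop and the $\ell_i$'s form a basis of $R_1$, the ideal $I_a(\C)$ is, up to linear change of variables, the $a$th squarefree Veronese in $k$ variables, again Eagon--Northcott. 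Since $I_a(\C)$ is generated in degree $a$, the lower bound $\text{reg}(R/I_a(\C))\geq a-1$ is automatic, so only the upper bound $\text{reg}(I_a(\C))\leq a$ needs to be established.

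For the inductive step, fix a non-coloop $\ell_n\in\C$ and set $\C':=\C\setminus\ell_n$, a code of length $n-1$ in the same ambient ring $R$. The identity
\[
  I_a(\C)=I_a(\C')+\ell_n\cdot I_{a-1}(\C')
\]
gives the Mayer--Vietoris short exact sequence
\[
  0\longrightarrow I_a(\C')\cap \ell_n I_{a-1}(\C')\longrightarrow I_a(\C')\oplus I_{a-1}(\C')(-1)\longrightarrow I_a(\C)\longrightarrow 0.
\]
By the inductive hypothesis on $n$, one has $\text{reg}(I_a(\C'))=a$ and $\text{reg}(I_{a-1}(\C'))=a-1$, so the middle term has regularity $a$. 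The standard regularity estimate applied to this sequence reduces the conjecture to the bound
\[
  \text{reg}\bigl(I_a(\C')\cap \ell_n I_{a-1}(\C')\bigr)\leq a+1.
\]
Factoring $\ell_n$ out rewrites the left-hand side as $\ell_n\cdot J$, where $J:=I_{a-1}(\C')\cap (I_a(\C'):_R\ell_n)$, so it suffices to show $\text{reg}(J)\leq a$. Reducing modulo $\ell_n$ interprets the colon ideal in $R/(\ell_n)\cong\mathbb K[x_1,\dots,x_{k-1}]$ in terms of the contracted code $\C/\ell_n$, suggesting a secondary induction on the ambient dimension $k$.

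The main obstacle is gaining control over the colon ideal $(I_a(\C'):_R\ell_n)$: intersections of linearly resolved ideals need not themselves be linearly resolved, and the precise structure of this colon depends on which circuits of $\M$ pass through $\ell_n$. Proposition~\ref{primarydecomp} pins down only the lowest-height primary components of $I_a(\C')$, leaving the higher-height ideal $K$ there largely unknown; making $K$ explicit, perhaps through the Fitting module $\fitt(\C)$ of \cite{GaTo} or through the saturation with respect to $\mathfrak m$ alluded to after Proposition~\ref{primarydecomp}, appears essential. A complementary strategy is polarization: introduce fresh variables $z_1,\dots,z_n$ and realize $I_a(\C)$ as the image of the squarefree Veronese ideal $\widetilde I_a\subset \mathbb K[z_1,\dots,z_n]$ under the substitution $z_i\mapsto \ell_i$. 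Although $\widetilde I_a$ carries a linear Eagon--Northcott resolution, the linear ideal encoding the code relations is not a regular sequence on the Veronese quotient in general, so one would be forced to establish a Koszul-type property for that quotient --- itself a problem of comparable depth.
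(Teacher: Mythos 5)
The statement you are addressing is presented in the paper as Conjecture \ref{conj1}; the authors do not prove it, and it remains open. Your write-up is candid that it is a strategy rather than a proof, and the gap you acknowledge is genuine and is in fact the entire content of the problem. Everything in your inductive step reduces, via the Mayer--Vietoris sequence for $I_a(\C)=I_a(\C')+\ell_n I_{a-1}(\C')$, to bounding ${\rm reg}(J)$ for $J=I_{a-1}(\C')\cap(I_a(\C'):_R\ell_n)$, and you have no handle on that colon ideal. This is not an incidental technicality: the paper's own partial result, Proposition \ref{regColon}, shows that the colon identity $I_a(\C):\ell=I_{a-1}(\C')$ (Conjecture \ref{conj2}) is essentially \emph{equivalent} to the regularity conjecture together with a single dimension count in degree $a$, and the only structural information available about $I_a(\C')$ beyond its minimal primes of lowest height is the undetermined component $K$ of Proposition \ref{primarydecomp}. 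Your reduction therefore transforms the conjecture into a statement of the same depth rather than resolving it; the polarization route you sketch has the same character, since the linear forms $z_i-\ell_i$ are not a regular sequence on the squarefree Veronese quotient outside the MDS case, and the required Tor-vanishing is again the full problem.

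Some smaller points. The base cases you list are correct and coincide with the ones the authors record as known: $a\le d_1(\C)$ gives $I_a(\C)=\m^a$ by \cite[Theorem 3.1]{To1}, the MDS (determinantal) case has a linear Eagon--Northcott resolution, and $a=n$ is principal. The regularity bookkeeping in the Mayer--Vietoris step is also correct: ${\rm reg}(I_a(\C))\le\max\{{\rm reg}(I_a(\C')),\,{\rm reg}(I_{a-1}(\C'))+1,\,{\rm reg}(\ell_n J)-1\}$, so ${\rm reg}(J)\le a$ would indeed suffice, and the factorization $I_a(\C')\cap\ell_n I_{a-1}(\C')=\ell_n J$ is valid because $\ell_n$ is a nonzerodivisor. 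Be aware, however, that your colon $(I_a(\C'):\ell_n)$, taken with respect to a form $\ell_n\notin\C'$, is not the colon treated by Conjecture \ref{conj2} or by \cite[Lemma 6.2]{Be} as used in Remark \ref{Berget}, which concern $I_a(\C):\ell$ for $\ell\in\C$; so you cannot directly import the degree-$(a-1)$ identification proved there. If you pursue this line, the sequence the authors work with, $0\to R(-1)/(I_a(\C):\ell)\to R/I_a(\C)\to R/\langle\ell,I_a(\C'')\rangle\to 0$, combined with deletion--contraction, is the better-adapted tool, and Proposition \ref{regColon} tells you precisely what would remain to be shown.
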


The conjecture has been verified for any $\mathcal C$ and any $1\leq a\leq d_1(\C)$ (\cite[Theorem 3.1]{To1}), and whenever the ring $R/I_a(\C)$ is determinantal (see the argument following the proof of Proposition 2.1 in \cite{To5}; $I_a(\C)$ is generated by the maximal minors of an $a\times n$ matrix with linear forms entries, and in the mentioned conditions the Eagon-Northcott complex becomes a free resolution). For example, the defining ideals of (usual) star configurations fit under the latter case. The conjecture is also true for $a=d+1$, under some special conditions (see \cite[Theorem 3.1]{AnTo}).


The key step in the proof of \cite[Theorem 2.5]{GaTo} was to show that for (usual) star configurations built on $\mathcal C$,
\[
I_a(\C):\ell=I_{a-1}(\C'),
\]
for all $2\leq a\leq n$, where $\ell$ is any linear form in $\mathcal C$, and $\mathcal C'=\mathcal C\setminus\{\ell\}$. We conjecture that the above equality of ideals is true for any $\mathcal C$:

\begin{conj}\label{conj2} For any $\C\subset R$ and for any $\ell\in \C$, in $R$ one has the equality of ideals $I_a(\C):\ell=I_{a-1}(\C'), \mbox{ for all } 2\leq a\leq n$.
\end{conj}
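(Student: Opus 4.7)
The containment $I_{a-1}(\C')\subseteq I_a(\C):\ell$ is immediate, since $\ell\cdot\ell_{i_1}\cdots\ell_{i_{a-1}}$ is itself an $a$-fold product from $\C$ whenever each $\ell_{i_j}\in\C'$. For the reverse, relabel so $\ell=\ell_n$ and $\C'=(\ell_1,\ldots,\ell_{n-1})$. Sorting the generators of $I_a(\C)$ by whether or not they involve $\ell_n$ yields the splitting $I_a(\C)=\ell_n\cdot I_{a-1}(\C')+I_a(\C')$, from which a routine colon manipulation (compare the proof of \cite[Theorem 2.5]{GaTo}) gives
\[
I_a(\C):\ell_n \;=\; I_{a-1}(\C') + \bigl(I_a(\C'):\ell_n\bigr).
\]
The conjecture therefore reduces to the single containment $I_a(\C'):\ell_n\subseteq I_{a-1}(\C')$.

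If $\ell_n\notin\mathrm{span}(\C')$, a linear change of coordinates sends $\ell_n$ to $x_k$ and places every $\ell_i$ with $i<n$ in $\K[x_1,\ldots,x_{k-1}]$; then $x_k$ is a non-zero-divisor on $R/I_a(\C')$, so $I_a(\C'):\ell_n=I_a(\C')\subseteq I_{a-1}(\C')$, and this case is finished. The substantive case is $\ell_n\in\mathrm{span}(\C')$, and here my plan is to compare the primary decompositions of the two sides via Proposition \ref{primarydecomp}: writing $d_{r'}(\C')<a\leq d_{r'+1}(\C')$, it gives
\[
I_a(\C')=\bigcap_{\mathfrak{p}\in\mathrm{Min}_{k-r'}(I_a(\C'))}\mathfrak{p}^{\,a-(n-1)+\nu'(\mathfrak{p})}\cap K',\qquad \mathrm{ht}(K')>k-r',
\]
and similarly for $I_{a-1}(\C')$. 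Colons distribute over intersections, and a short standard calculation shows that, for any linear prime $\mathfrak{p}$ in a polynomial ring and any linear form $\ell$, $\mathfrak{p}^e:\ell$ equals $\mathfrak{p}^{e-1}$ when $\ell\in\mathfrak{p}$ and $\mathfrak{p}^e$ otherwise. Consequently the height-$(k-r')$ part of $I_a(\C'):\ell_n$ becomes explicit, and matching it to the corresponding part of $I_{a-1}(\C')$ should reduce to a flat-counting argument in the matroid of $\C'$, entirely parallel to the computations already carried out in the proof of Theorem \ref{main}.

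The main obstacle is the higher-codimension tail $K'$, about which Proposition \ref{primarydecomp} says nothing beyond its height, so without additional input one cannot certify $K':\ell_n\subseteq I_{a-1}(\C')$ from the primary decomposition alone. Two complementary strategies look promising. One is induction on $n$: expanding $\ell_n=\sum a_i\ell_i$, the inductive hypothesis supplies $I_a(\C'):\ell_i=I_{a-1}(\C'\setminus\ell_i)\subseteq I_{a-1}(\C')$ for each $i$, but the sticking point is that $\ell_n f\in I_a(\C')$ does not in general imply the individual conditions $\ell_i f\in I_a(\C')$, and bridging this gap will require a genuinely new input. The other---which I expect to be the cleaner route---is to prove Conjecture \ref{conj1} in tandem: if one carries both conjectures along an induction on $n$, the linearity of the resolutions of $I_a(\C')$ and $I_{a-1}(\C')$ forces sharp regularity constraints on $K'$, and the colon exact sequence $0\to R/(I_a(\C):\ell)(-1)\xrightarrow{\ell}R/I_a(\C)\to R/(I_a(\C)+\langle\ell\rangle)\to 0$ then lets the two conjectures feed one another, the crux being to relate $\mathrm{reg}(R/I_a(\C))$ to the structure of $K'$ through this sequence.
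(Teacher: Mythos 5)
This statement is Conjecture~\ref{conj2}: the paper does not prove it, and offers only partial results in its direction (Lemma~\ref{coloop}, Propositions~\ref{regColon}, \ref{prop:radicalColon} and \ref{recdegree}). Your proposal, as you yourself acknowledge in the final paragraph, also does not prove it. The concrete gap is the one you name: Proposition~\ref{primarydecomp} controls only the components of $I_a(\C')$ of minimal height, and nothing is known about the higher-height tail $K'$ beyond its height, so the containment $K':\ell_n\subseteq I_{a-1}(\C')$ cannot be certified. This is precisely the obstruction the authors themselves cannot get past, and it is why the statement is a conjecture rather than a theorem.

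That said, your preliminary reductions are correct and worth recording. The identity $I_a(\C):\ell_n=I_{a-1}(\C')+\bigl(I_a(\C'):\ell_n\bigr)$ does follow from the splitting $I_a(\C)=\ell_n I_{a-1}(\C')+I_a(\C')$, and it validly reduces the conjecture to the single containment $I_a(\C'):\ell_n\subseteq I_{a-1}(\C')$; note, however, that this is a colon by a linear form that in general does \emph{not} belong to $\C'$, so the reduced statement is not an instance of the conjecture for a shorter code and does not by itself set up an induction on $n$. Your treatment of the case $\ell_n\notin\mathrm{span}(\C')$ via a non-zero-divisor argument recovers the coloop case (Lemma~\ref{coloop}) and is in fact cleaner and characteristic-free, whereas the paper's derivative argument requires characteristic $0$. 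Your primary-decomposition plan would at best reproduce Proposition~\ref{recdegree}, i.e.\ equality of the unmixed lowest-height parts and hence of degrees, not of the ideals; and your ``tandem'' strategy with Conjecture~\ref{conj1} is essentially Proposition~\ref{regColon}, which shows the two conjectures are equivalent up to a single graded-piece condition --- but Conjecture~\ref{conj1} is also open, so this relocates the difficulty rather than removing it. Until one gains control over the embedded and higher-height components of these ideals (their generating degrees, or their colons by $\ell$), neither your route nor the paper's closes the argument.
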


For notation purposes, we will find it useful to make the convention $I_0(\C):=R$, for any linear code $\C$. We can extend the range of $a$ to $1,\ldots, n$, since $I_1(\C)=\mathfrak m$, and therefore $I_1(\C):\ell=R=I_0(\C')$.

If ${\rm ht}(I_{a-1}(\C'))=k$, then $I_{a-1}(\C')=\mathfrak m^{a-1}$ (by \cite[Theorem 3.1]{To1}). Since obviously $I_a(\C):\ell\subseteq \mathfrak m^{a-1}$, we see that Conjecture \ref{conj2} is satisfied for $a\in\{1,\ldots,d_1(\C)\}$.

\begin{lem}\label{coloop} Let $\C\subset R$. If $\ell$ is a coloop in $\M(\C)$, then in $R$ one has $$I_a(\C):\ell=I_{a-1}(\C'),$$ for all $a=1,\ldots,n$.
\end{lem}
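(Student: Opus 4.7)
The plan is to turn the coloop hypothesis into a clean linear change of coordinates and then reduce the equality to an elementary non-zero-divisor statement. Since $\ell$ is a coloop of $\M(\C)$, the coefficient vector of $\ell$ is linearly independent from the coefficient vectors of the forms in $\C'$, so after a $\K$-linear change of variables on $R$ I may assume $\ell = x_k$ and $\C' \subset S := \K[x_1,\ldots,x_{k-1}]$. In particular, $I_a(\C')$ and $I_{a-1}(\C')$, viewed as ideals of $R$, are extensions of ideals of $S$.

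With these coordinates, every $a$-fold product $\ell_{i_1}\cdots \ell_{i_a}$ of forms from $\C$ either uses the factor $x_k$ --- in which case it is $x_k$ times an $(a-1)$-fold product of forms from $\C'$ --- or else is an $a$-fold product of forms from $\C'$. Grouping the generators of $I_a(\C)$ accordingly produces
\[
I_a(\C) \;=\; x_k\,I_{a-1}(\C') \;+\; I_a(\C') \;\subseteq\; I_{a-1}(\C'),
\]
where the inclusion uses $I_a(\C') \subseteq I_{a-1}(\C')$. The containment $I_{a-1}(\C') \subseteq I_a(\C):\ell$ is immediate from the first equality, so the remaining work is the reverse inclusion.

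For that direction I would exploit $R = S[x_k]$ together with the fact that $I_{a-1}(\C')$ is extended from $S$, yielding the isomorphism of graded rings
\[
R/I_{a-1}(\C') \;\cong\; \bigl(S/I_{a-1}(\C')\bigr)[x_k].
\]
The right-hand side is a polynomial extension of a nonzero ring (for $a\geq 2$, $I_{a-1}(\C')$ is generated in positive degree; the case $a=1$ is already covered by $I_1(\C) = \m$ and the convention $I_0(\C') = R$, as noted just before the lemma), so $x_k$ is a non-zero-divisor modulo $I_{a-1}(\C')$. Combined with the inclusion $I_a(\C) \subseteq I_{a-1}(\C')$ from the previous paragraph, this gives
\[
I_a(\C) : \ell \;\subseteq\; I_{a-1}(\C') : x_k \;=\; I_{a-1}(\C'),
\]
which closes the argument. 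The only real point of care is justifying the coordinate change from the coloop hypothesis; the ring-theoretic half is then automatic.
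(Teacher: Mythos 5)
Your proof is correct, and while it shares the paper's setup --- normalizing coordinates so that $\ell=x_k$ and $\C'\subset S=\K[x_1,\ldots,x_{k-1}]$, and splitting the generators to get $I_a(\C)=x_kI_{a-1}(\C')+I_a(\C')$ --- it diverges at the key step in a way worth noting. The paper takes $f\in I_a(\C):x_k$, reduces to $x_k h\in I_a(\C')$ with $h=f-g$, and then runs an iterated partial-differentiation argument ($\partial_k^m h=m!\,h_m$, etc.) to peel off the powers of $x_k$ and conclude $h\in I_a(\C')$; this relies on the standing characteristic-zero assumption of that section. You instead observe that $I_a(\C)\subseteq I_{a-1}(\C')$ (since both summands land there) and that $I_{a-1}(\C')$ is extended from $S$, so $R/I_{a-1}(\C')\cong\bigl(S/I_{a-1}(\C')\bigr)[x_k]$ and $x_k$ is a non-zero-divisor modulo $I_{a-1}(\C')$, whence $I_a(\C):x_k\subseteq I_{a-1}(\C'):x_k=I_{a-1}(\C')$. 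This is shorter, and it buys something real: it is characteristic-free, since it never divides by $m!$. Your handling of the edge cases is also fine --- $a=1$ is covered by the convention $I_0(\C')=R$ stated just before the lemma, and for $a\geq 2$ the ideal $I_{a-1}(\C')\cap S$ is generated in positive degree, so $S/I_{a-1}(\C')\neq 0$ and the non-zero-divisor claim is legitimate. The one point you rightly flag as needing care, the coordinate change, is justified exactly as in the paper: a coloop means the last column of $G$ lies outside the span of the others, so a basis adapted to that span sends $\ell$ to $x_k$ and the remaining forms into $S$.
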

\begin{proof} Assume $\ell=\ell_n=x_k$ is a coloop. Then the last row of $G$,
  the generating matrix of $\mathcal C$, has 1 in the last entry and 0
  everywhere else. Hence $\mathcal C'\subset S:=\mathbb
  K[x_1,\ldots,x_{k-1}]$. Then for all $b=1,\ldots,n-1$, the generators of
  $I_b(\C')$ are elements of $S:=\mathbb K[x_1,\ldots,x_{k-1}]\subset R$. Because of this inclusion, the ideal $I_b(\C')R$ of $R$ will be written simply as $I_b(\C')$.

  If $f\in I_a(\C):x_k$, then $x_kf\in I_a(\C)=x_kI_{a-1}(\C')+I_a(\C')$ shows that
  $x_k(f-g)\in I_a(\C')$ for some $g\in I_{a-1}(\C')$.

Denote $h:=f-g$. Then we can write $$h=x_k^mh_m+\cdots+x_kh_1+h_0, h_i\in S,$$ for some $m\leq \deg(h)$.

Because the generators of $I_a(\C')$ are in $S$, taking partial derivative $\partial_k=\partial/\partial x_k$ of $x_kh\in I_a(\C')$, we obtain
\[
h+x_k\partial_kh\in I_a(\C').
\]
 Multiplying this by $x_k$, we obtain $x_k^2\partial_kh\in I_a(\C')$.
 Again, taking the partial derivative w.r.t. $x_k$ and multiplying by $x_k$, we obtain
 \[
 x_k^3\partial_k^2h\in I_a(\C').
 \]
 Repeating the same argument, we see that $x_{k}^{m+1}\partial_k^m h\in I_a(\C')$ but $\partial_k^m h=m! h_m$. So, we showed that $x_{k}^{m+1}h_m\in I_a(\C')$.

Taking partial derivative w.r.t. $x_k$, leads to $$x_k^mh_m\in I_a(\C').$$ With the same trick applied to $h-x_k^mh_m$, we recursively show that all $x_k^ih_i$'s, and hence $h$ itself, belong to $I_a(\C')\subset I_{a-1}(\C')$. Therefore $f\in I_{a-1}(\C')$, leading to $I_a(\C):\ell=I_{a-1}(\C')$.
\end{proof}

\medskip

Similar to the assumption at the beginning of the proof of Lemma \ref{coloop}, after a change of variables and a possible permutation of the columns of $G$, we can assume the $\ell$ that shows up in the arguments below is $\ell=\ell_n=x_k$. Then we consider $\tilde{\mathcal C}$ to be the linear code with generating matrix $\tilde{G}$ obtained from $G$ by deleting the last row and the last column. As the anonymous referee pointed out, the matrix $\tilde{G}$ may have zero columns; assume that we have exactly $\tilde{n}_0$ such columns.

Let $\mathcal C''$ be the linear code with generating matrix $G''$ obtained from $\tilde{G}$ after removing all those zero columns. Then $\mathcal C''$ is an $[n-1-\tilde{n}_0,k-1]-$linear code, with defining linear forms in $S:=\mathbb K[x_1,\ldots,x_{k-1}]$.

\begin{rem}\label{pathologic}
(a) If $a\leq n-1-\tilde{n}_0$, then $$\langle x_k,I_a(\C)\rangle=\langle x_k,I_a(\mathcal C'')\rangle.$$

(b) If $a\geq n-\tilde{n}_0$, then every generator of $I_a(\C)$ will have a factor of $x_k$, since $G$ has exactly $\tilde{n}_0+1$ columns proportional to $x_k$. Then in this case $I_a(\C)=x_k I_{a-1}(\C')$, where $\C'=\C\setminus\{x_k\}$, hence Conjecture \ref{conj2} is superfluously satisfied when $n\geq a\geq n-\tilde{n}_0$. Also $\langle x_k,I_a(\C)\rangle=\langle x_k\rangle$ which in turn will equal to $\langle x_k,I_a(\C'')\rangle$, if we make the convention that $I_b(\mathcal D)=0$ for any $b>|\mathcal D|$, where $\mathcal D$ is any linear code of block-length $|\mathcal D|$, and with no zero column in the generating matrix.
\end{rem}

\begin{rem}\label{Berget} We used the construction of Berget \cite{Be} in the proof of Proposition \ref{mingens} and invoked \cite[Remark 2.3]{GaTo}:
\[
\displaystyle\bigoplus_{0\leq u\leq n-a}P(\mathcal C)_{u,n-a}=(I_a({\mathcal C}))_{a}.
\]
This gives the degree $a$ piece of $P(\mathcal C)$ (see the beginning of \cite[Section 2]{Be}). So by \cite[Lemma 6.2]{Be}, if $\ell$ is not a coloop of the matroid of $\mathcal C$, then
	\[
	0\to P(\mathcal C')(-1)\xrightarrow{\cdot \ell} P(\mathcal C)\to P(\mathcal C'')\to 0.
	\]
    In degree $a$, the short exact sequence gives the following formula for dimensions: $$\dim P(\mathcal C')_{a-1}=\dim P(\mathcal C)_a-\dim P(\mathcal C'')_a.$$ This, combined with the identification above and with the standard short exact sequence of $R$-graded modules
\[
0\longrightarrow \frac{R(-1)}{I_a(\C):\ell}\stackrel{\cdot \ell}\longrightarrow \frac{R}{I_a(\C)}\longrightarrow \frac{R}{\langle \ell,I_a(\mathcal C'')\rangle}\longrightarrow 0,
\] leads to $\dim(I_a(\C):\ell)_{a-1}=\dim (I_{a-1}(\C'))_{a-1},$ and hence the equality $$(I_a(\C):\ell)_{a-1}=(I_{a-1}(\C'))_{a-1}.$$
\end{rem}

\begin{rem}\label{mingens_conj2} The ideal $I_a(\C):\ell$ is minimally generated in degree $\geq a-1$, since $I_a(\C)$ is generated in degree $a$. We have that $I_{a-1}(\C')\subset I_a(\C):\ell$. Therefore Conjecture \ref{conj2} is equivalent to showing that for any $[n,k]-$linear code $\C$ and any $\ell\in \C$ one has the equality of minimum number of generators $\mu(I_a(\C):\ell)=\mu(I_{a-1}(\C'))$, for all $1\leq a\leq n$.
\end{rem}

\begin{prop}\label{regColon} Conjecture \ref{conj2} is equivalent to the following two conditions combined:
\begin{enumerate}
 \item Conjecture \ref{conj1}, and
 \item for any $[n,k]-$linear code $\C$ and any $\ell\in \C$ we have $\dim_{\mathbb K}(I_a(\C):\ell)_a=\dim_{\mathbb K}(I_{a-1}(\C'))_a$, for all $1\leq a\leq n$.
\end{enumerate}
\end{prop}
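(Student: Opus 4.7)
The plan is to prove both implications via the standard short exact sequence
\begin{equation}\label{sesplan}
0\to R/(I_a(\C):\ell)(-1)\xrightarrow{\cdot\ell} R/I_a(\C)\to R/\langle\ell,I_a(\C)\rangle\to 0,
\end{equation}
combined with Remark \ref{pathologic}, which identifies $R/\langle\ell,I_a(\C)\rangle\cong S/I_a(\C'')$ whenever $a\leq n-1-\tilde n_0$ (after the change of variables making $\ell=x_k$). The boundary range $a\geq n-\tilde n_0$ will be handled separately via Remark \ref{pathologic}(b), which already makes Conjecture \ref{conj2} superfluously true.

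For the forward direction, I would first observe that (2) is the immediate degree-$a$ restriction of the ideal equality $I_a(\C):\ell=I_{a-1}(\C')$. To establish Conjecture \ref{conj1}, I would induct on the length $n$. The range $a\leq d_1(\C)$ is already covered by \cite[Theorem 3.1]{To1} since $I_a(\C)=\m^a$. For $a>d_1(\C)$, if $a\geq n-\tilde n_0$ then Remark \ref{pathologic}(b) gives $I_a(\C)=\ell\cdot I_{a-1}(\C')\cong I_{a-1}(\C')(-1)$ as graded $R$-modules, so a linear resolution of $I_{a-1}(\C')$ (inductive hypothesis, applied to the shorter code $\C'$) shifts to one for $I_a(\C)$. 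Otherwise, Conjecture \ref{conj2} substitutes $R/(I_a(\C):\ell)=R/I_{a-1}(\C')$ into (\ref{sesplan}); the inductive hypothesis supplies ${\rm reg}(R/I_{a-1}(\C'))=a-2$ and ${\rm reg}(S/I_a(\C''))=a-1$, and the standard short-exact-sequence regularity bound yields ${\rm reg}(R/I_a(\C))\leq a-1$. This matches the automatic lower bound $a-1$ coming from the fact that $I_a(\C)$ is generated in degree $a$, forcing equality.

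For the backward direction, the plan is a direct regularity computation. The inclusion $I_{a-1}(\C')\subseteq I_a(\C):\ell$ always holds, so only the reverse inclusion requires argument. The case $a\geq n-\tilde n_0$ is trivial by Remark \ref{pathologic}(b). Otherwise, applying Conjecture \ref{conj1} to both $\C$ and $\C''$ gives ${\rm reg}(R/I_a(\C))={\rm reg}(S/I_a(\C''))=a-1$, and the regularity bound applied to (\ref{sesplan}) yields ${\rm reg}(I_a(\C):\ell)\leq a$. Hence every minimal generator of $I_a(\C):\ell$ has degree $\leq a$; combined with the trivial fact that every minimal generator must have degree $\geq a-1$ (since $I_a(\C)$ is generated in degree $a$ and $R$ is a domain), the ideal $I_a(\C):\ell$ is generated by its degree-$(a-1)$ and degree-$a$ components. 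Remark \ref{Berget} matches the degree-$(a-1)$ components unconditionally and hypothesis (2) matches the degree-$a$ components, so both generating pieces lie inside $I_{a-1}(\C')$, delivering the reverse inclusion.

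The main obstacle I anticipate is the bookkeeping around the pathological situation $\tilde n_0>0$, where $\C''$ has length strictly less than $n-1$ rather than exactly $n-1$; fortunately $\C'$ and $\C''$ always have length strictly less than $n$, so the induction on $n$ in the forward direction closes cleanly. The conceptually subtle point in the backward direction is recognizing that condition (2) by itself would not force equality of the ideals in all degrees; it is the regularity input of Conjecture \ref{conj1} applied to the auxiliary code $\C''$ (and not just to $\C$) that bounds the degrees of minimal generators of $I_a(\C):\ell$ from above, confining them to exactly the two degrees where (2) and Remark \ref{Berget} supply the needed matchings.
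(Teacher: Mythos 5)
Your proposal is correct and follows essentially the same route as the paper: the same short exact sequence, the same regularity inequalities for short exact sequences with induction on $n$ in the forward direction, and the same matching of the degree-$(a-1)$ piece (via Remark \ref{Berget}) and the degree-$a$ piece (via hypothesis (2)) to pin down $I_a(\C):\ell$ in the converse. The only point you should make explicit is that the exact sequence in Remark \ref{Berget} requires $\ell$ not to be a coloop, so the coloop case of the backward direction must be dispatched separately by Lemma \ref{coloop}, exactly as the paper does.
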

\begin{proof} Let $\ell\in \C$. In $R$ we have $I_a(\C)=\ell I_{a-1}(\C')+I_a(\mathcal C')$. Assume $\ell=\ell_n=x_k$.	
	
Before moving further, we analyze the extreme cases separately:
\begin{itemize}
	 \item If $a=n$, then $I_n(\mathcal C)$ is a principal ideal generated by $\ell_1\cdots\ell_n$, so ${\rm reg}(R/\langle \ell_1\cdots\ell_n\rangle)=n-1$, and $I_n(\mathcal C):\ell_n=\langle \ell_1\cdots\ell_{n-1}\rangle=I_{n-1}(\mathcal C')$. So in order to make sense of $I_a(\mathcal C'')$, we can assume $1\leq a\leq n-1$.
	
	 \item If $a=1$, then $I_1(\mathcal C)=\langle \ell_1,\ldots,\ell_n\rangle=\mathfrak m$, leading to ${\rm reg}(R/I_1(\mathcal C))=0$, and to $I_1(\mathcal C):\ell=R=I_0(\mathcal C')$, by the convention.
\end{itemize}
	
We have the standard short exact sequence of $R-$graded modules
\[
0\longrightarrow \frac{R(-1)}{I_a(\C):x_k}\stackrel{\cdot x_k}\longrightarrow \frac{R}{I_a(\C)}\longrightarrow \frac{R}{\langle x_k,I_a(\mathcal C'')\rangle}\longrightarrow 0.
\]

Let us denote
\[
r':={\rm reg}(R/(I_a(\C):x_k)),\, r:={\rm reg}(R/I_a(\C)),\, r'':={\rm reg}(R/\langle x_k,I_a(\C'')\rangle).
\]

For any ideal $J$ of $R$, one has ${\rm reg}(R(-1)/J)=1+{\rm reg}(R/J)$. This can be easily seen by analyzing the definition of regularity using the shifts in the graded minimal free resolution.

From the classical inequalities for the regularity under short exact sequences \cite[Corollary 20.19]{Ei0}, one has	

\begin{eqnarray*}
    r'+1&\leq&\max\{r,r''+1\},\\
    r&\leq&\max\{r'+1,r''\},\\
    r''&\leq&\max\{r',r\}.
\end{eqnarray*}

Now we show $\Rightarrow$. If we are in the situation (b) in Remark \ref{pathologic}, then $r''=0$. The first two inequalities above give that either $r=0$, or $r=r'+1$. So, $I_a(\C)$ is generated by linear forms (namely $I_a(\C)=\frak m$, or equivalently $a=1$), or ${\rm reg}(R/I_a(\C))=a-1$ if and only if ${\rm reg}(R/I_{a-1}(\C'))=a-2$. This implies that we can keep removing $x_k$ from $\C$ until we are in situation (a) in Remark \ref{pathologic} when we have
\[
r'={\rm reg}(R/I_{a-1}(\C'))=a-2
\]
and
\[
r''={\rm reg}(S/I_a(\mathcal C''))=a-1.
\]
For the above equalities we used induction on $n\geq 2$, and the assumption that Conjecture \ref{conj2} is valid for $\C,\C',$ and $\C''$. The base case of the induction,i.e. $n=2$, is treated at the beginning of the proof: $a$ can only be equal to either $2=n$ or $1$.

The last two inequalities above become
\begin{eqnarray*}
    r&\leq&\max\{a-1,a-1\}\\
  a-1 & \leq & \max \{ a-2,r\},
\end{eqnarray*} which implies the desired formula $r={\rm reg}(R/I_a(\C))=a-1$.

Part (2) is immediate from Conjecture \ref{conj2} being true.

\medskip

For the converse implication $\Leftarrow$, suppose $x_k$ is not a coloop; the case when $x_k$ is a coloop is dealt with by Lemma \ref{coloop}. Also we may assume we are in situation (a) in Remark \ref{pathologic}, since situation (b) gives Conjecture \ref{conj2} for free.

If we assume Conjecture \ref{conj1} to be true, then $r=a-1$ and $r''=a-1$, and the three inequalities of regularity become:
\begin{eqnarray*}
    r'+1&\leq&\max\{a-1,a\},\\
    a-1&\leq&\max\{r'+1,a-1\},\\
    a-1&\leq&\max\{r',a-1\}.
\end{eqnarray*} The first inequality clearly gives $r'\leq a-1$. This means that $I_a(\C):x_k$ is minimally generated in degrees $a-1$ and (possibly) $a$. Remark \ref{Berget} gives that $(I_a(\C):x_k)_{a-1}=(I_{a-1}(\C'))_{a-1}$, whereas condition (2) gives $(I_a(\C):x_k)_{a}=(I_{a-1}(\C'))_{a}$. So $I_a(\C):x_k$ is in fact minimally generated in degree $a-1$. Remark \ref{mingens_conj2} will then prove the claim.
\end{proof}

\medskip

In order to have $I_a(\C):\ell=I_{a-1}(\C')$, one must at least have the equality up to radicals, which we verify now.

\begin{prop}\label{prop:radicalColon} For any $a=2,\ldots,n$, and any $\ell\in\mathcal C$, we have: $$\sqrt{I_a(\C):\ell}=\sqrt{I_{a-1}(\C')},$$ where $\mathcal C'=\mathcal C\setminus\{\ell\}$.
\end{prop}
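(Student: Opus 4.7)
\noindent\emph{Proof plan.} The plan is to establish the two inclusions separately. For $\sqrt{I_{a-1}(\C')}\subseteq\sqrt{I_a(\C):\ell}$, I would observe the ideal-level containment $I_{a-1}(\C')\subseteq I_a(\C):\ell$: for any generator $\ell_{i_1}\cdots\ell_{i_{a-1}}$ of $I_{a-1}(\C')$ (indices in $[n]$ distinct from the index of $\ell$), its product with $\ell$ is an $a$-fold product of distinct linear forms in $\C$, and hence lies in $I_a(\C)$. Passing to radicals gives the desired inclusion.

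For the reverse inclusion, the plan is to show that every minimal prime $\mathfrak{p}$ of $I_{a-1}(\C')$ contains $\sqrt{I_a(\C):\ell}$. As recalled at the start of Section 2, $\mathfrak{p}=\mathfrak{p}_J$ for some $J\subseteq[n-1]$ with $|J|=n-a+1$. Along the way, I would record the key containment $I_a(\C)\subseteq I_{a-1}(\C')$ (each $a$-fold product of $\C$ either involves $\ell$ and so lies in $\ell\cdot I_{a-1}(\C')\subseteq I_{a-1}(\C')$, or else is an $a$-fold product in $\C'$ and so lies in $I_a(\C')\subseteq I_{a-1}(\C')$), giving $\mathfrak{p}\supseteq I_a(\C)$. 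The argument then divides according to whether $\ell\in\mathfrak{p}$.

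If $\ell\notin\mathfrak{p}$, then for any $f\in I_a(\C):\ell$ the relation $\ell f\in I_a(\C)\subseteq\mathfrak{p}$ combined with primality forces $f\in\mathfrak{p}$. If instead $\ell\in\mathfrak{p}$, let $\tilde{J}=cl(J\cup\{n\})$ in $\M(\C)$, where $n$ denotes the index of $\ell$; then $\mathfrak{p}=\mathfrak{p}_{\tilde{J}}$ and $\nu(\mathfrak{p})=|\tilde{J}|\geq n-a+2$. Applying Proposition \ref{primarydecomp}, the $\mathfrak{p}$-primary component of $I_a(\C)$ is $\mathfrak{p}^{e}$ with $e=a-n+\nu(\mathfrak{p})\geq 2$. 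Because $\ell$ is a linear form while every element of $\mathfrak{p}^{e}$ has degree $\geq e\geq 2$, we have $\ell\notin\mathfrak{p}^{e}$; the standard computation $\mathfrak{p}^{e}:\ell=\mathfrak{p}^{e-1}$ then yields $I_a(\C):\ell\subseteq\mathfrak{p}^{e-1}$, and taking radicals gives $\sqrt{I_a(\C):\ell}\subseteq\mathfrak{p}$.

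The main technical obstacle is justifying, in the subcase $\ell\in\mathfrak{p}$, that $\mathfrak{p}_{\tilde{J}}$ is in fact a minimal prime of $I_a(\C)$ of the minimum height addressed by Proposition \ref{primarydecomp}. The coloop case is settled by Lemma \ref{coloop}; for non-coloop $\ell$, the deletion $\M(\C')=\M(\C)\setminus\{n\}$ preserves the rank of every subset of $[n-1]$, and a careful comparison of heights using Corollary \ref{genhamming} and Proposition \ref{height} (relating $d_r(\C)$ and $d_{r'}(\C')$) should identify $\mathfrak{p}_{\tilde{J}}$ as belonging to $\mathrm{Min}_{k-r}(I_a(\C))$ as needed.
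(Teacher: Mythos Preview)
Your easy inclusion and the subcase $\ell\notin\mathfrak p$ are fine, and they match the paper. The gap is in the subcase $\ell\in\mathfrak p$. You want to invoke Proposition~\ref{primarydecomp} to say that the $\mathfrak p$-primary component of $I_a(\C)$ is $\mathfrak p^{e}$ with $e\geq 2$. But Proposition~\ref{primarydecomp} only identifies the primary components of \emph{minimum} height $k-r=\hi(I_a(\C))$; it says nothing about components sitting in the ideal $K$. There is no reason a minimal prime $\mathfrak p$ of $I_{a-1}(\C')$ should have height $\hi(I_a(\C))$, and in fact $\mathfrak p$ need not be a minimal prime of $I_a(\C)$ at all. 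For a concrete failure: take $\C=(x_1,x_2,x_3,x_4,x_1+x_2)\subset\K[x_1,\dots,x_4]$, $a=3$, $\ell=x_1+x_2$. Then $\mathfrak p=\langle x_1,x_2,x_3\rangle$ is a minimal prime of $I_2(\C')$ with $\ell\in\mathfrak p$, but $\hi(\mathfrak p)=3>2=\hi(I_3(\C))$, and $\mathfrak p$ is not minimal over $I_3(\C)$ since $\langle x_1,x_2\rangle\subsetneq\mathfrak p$ already contains $I_3(\C)$. Worse, the $\langle x_1,x_2\rangle$-primary component of $I_3(\C)$ is $\langle x_1,x_2\rangle$ itself (here $e=1$), so coloning by $\ell$ yields the unit ideal and gives no containment in $\mathfrak p$. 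Your final paragraph asserts that a ``careful comparison of heights'' will place $\mathfrak p_{\tilde J}$ in $\mathrm{Min}_{k-r}(I_a(\C))$, but this example shows that claim is false in general.

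The paper's argument avoids primary decomposition entirely. After arranging $\ell=x_k$, it uses the characteristic-zero hypothesis of the appendix: from $x_kf\in I_a(\C)$ one applies $\partial/\partial x_k$ to obtain $f+x_k\partial_k f\in I_{a-1}(\C)$, hence $f\in\langle x_k\rangle+I_{a-1}(\C)\subseteq\langle x_k\rangle+I_{a-1}(\C')\subseteq\mathfrak q$ for any minimal prime $\mathfrak q$ of $I_{a-1}(\C')$ containing $x_k$. This derivative trick works uniformly for every minimal prime, regardless of its height, and is the missing idea in your approach.
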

\begin{proof} Assume $\ell=\ell_n=x_k$. It is enough to prove that $I_a(\C):x_k\subseteq \sqrt{I_{a-1}(\C')}$.

Suppose not. Then there exists $f\in I_a(\C):x_k$, and there exists $\mathfrak q$ a minimal prime over $I_{a-1}(\C')$ with $f\notin\mathfrak q$.

We have $x_kf\in I_a(\C)\subset I_{a-1}(\C')\subset \mathfrak q$. So $x_k\in \mathfrak q$, as otherwise we would have $f\in \mathfrak q$.

Taking partial derivative of $x_kf$ w.r.t. $x_k$, one obtains $$f+x_k\partial_kf\in I_{a-1}(\mathcal C),$$ and hence $f\in \langle x_k\rangle+I_{a-1}(\mathcal C)$.

Since $$I_{a-1}(\mathcal C)\subset \langle x_k\rangle+ I_{a-1}(\C')\subset \langle x_k\rangle+\mathfrak q=\mathfrak q,$$ one obtains that $f\in \mathfrak q$. Contradiction.
\end{proof}

\medskip

Let $\C=(\ell_1,\ldots,\ell_n)\subset R:=\K[x_1,\ldots,x_k]$ be a collection of linear forms, some possibly proportional. Suppose $\langle \ell_1,\ldots,\ell_n\rangle=\langle x_1,\ldots,x_k\rangle=:\frak m$. Denote also with $\C$ the $[n,k]-$linear code whose generating matrix $G$ has columns dual to the linear forms $\ell_1,\ldots,\ell_n$. Also let $\C'=\C\setminus\{\ell\}$, where $\ell\in\C$.

Let $p_r$, and $p_r'$ denote the numbers showing up in Lemma \ref{lemma:Tutte} corresponding to $\C$ and, respectively, to $\C'$. Then, in support of Conjecture \ref{conj2} we have the following result.

\begin{prop}\label{recdegree} Let $a=d_r(\C)+j\leq d_{r+1}(\C)$ and $\ell\in\C$. If $j\geq 2$, or if $j=1$ and $p_r=p_r'$, then

\[
\deg(I_a(\C):\ell)=\deg(I_{a-1}(\C')).
\]
\end{prop}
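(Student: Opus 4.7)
By Lemma \ref{coloop}, we may reduce to the case where $\ell$ is not a coloop of $\M(\C)$; after a linear change of coordinates, put $\ell = x_k$. My strategy is to compute each side as a sum of binomial coefficients indexed by rank-$(k-r)$ flats and to match them combinatorially, dealing with the two subcases $j\ge 2$ and $(j=1,\ p_r=p'_r)$ in parallel.

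For the left-hand side, start from the partial primary decomposition
$I_a(\C)=\bigcap_{F\in\mathcal F}\mathfrak p_F^{\,a-n+|F|}\cap K$ of Proposition \ref{primarydecomp}, where $\mathcal F$ is the set of flats of $\M(\C)$ of rank $k-r$ with $|F|\ge n-a+1$ and $\mathrm{ht}(K)>k-r$. The identity
$(\mathfrak p_F^{\,m}:\ell)=\mathfrak p_F^{\,m}$ if $\ell\notin F$ and $\mathfrak p_F^{\,m-1}$ if $\ell\in F$ (with $\mathfrak p_F^{\,0}:=R$) gives a partial primary decomposition of $I_a(\C):\ell$. Combining Lemmas \ref{HilbPoly}, \ref{HPpower}, and \ref{degreeFatSchemes} yields
\[
\deg\bigl(I_a(\C):\ell\bigr)=\sum_{\substack{F\in\mathcal F\\ \ell\notin F}}\binom{|F|-n+a+k-r-1}{k-r}\;+\sum_{\substack{F\in\mathcal F,\ \ell\in F\\ |F|\ge n-a+2}}\binom{|F|-n+a+k-r-2}{k-r},
\]
provided $\mathrm{ht}(I_a(\C):\ell)=k-r$. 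For $j\ge 2$ this holds automatically since the first sum is nonempty; for $j=1$ with $p_r=p'_r$, it follows from the existence of a rank-$(k-r)$ flat of maximal size $n-d_r(\C)$ avoiding $\ell$.

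For the right-hand side, I claim the hypothesis on $(j,p_r,p'_r)$ is exactly what is needed to guarantee $d_r(\C')<a-1\le d_{r+1}(\C')$. Indeed, since $\ell$ is not a coloop one has $d_r(\C')\in\{d_r(\C)-1,d_r(\C)\}$, and the identity $p_r=p'_r$ is equivalent to $d_r(\C')=d_r(\C)-1$; so for $j=1$ the hypothesis places $a-1=d_r(\C')+1$, while for $j\ge 2$ either value of $d_r(\C')$ works. Hence Proposition \ref{height} gives $\mathrm{ht}(I_{a-1}(\C'))=k-r$, and Theorem \ref{main} applied to $\C'$ expresses $\deg(I_{a-1}(\C'))$ as an analogous sum indexed by the rank-$(k-r)$ flats of $\M(\C')=\M(\C)\setminus\ell$. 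The matching is then effected via the natural bijection between rank-$(k-r)$ flats of $\M(\C)\setminus\ell$ on the one hand and, on the other, rank-$(k-r)$ flats of $\M(\C)$ not containing $\ell$ together with sets $F\setminus\{\ell\}$ for those rank-$(k-r)$ flats $F$ of $\M(\C)$ containing $\ell$ whose rank is unchanged by removing $\ell$; these two families account for the two sums in the display above.

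The case $j=1,\ p_r=p'_r$ collapses to a direct count of maximal rank-$(k-r)$ flats avoiding $\ell$ on both sides, which is $c'_{r,p'_r}$ by Proposition \ref{C_I}. The case $j\ge 2$ is the harder one: after using the matroid dictionary above, reorganizing the two binomial sums on the left into the single sum on the right requires a Pascal-type identity relating $\binom{|F|-n+a+k-r-1}{k-r}$ and $\binom{|F|-n+a+k-r-2}{k-r}$, in the spirit of identity (\ref{desired_formula}) used in the proof of Theorem \ref{main}. I expect this combinatorial step, together with the careful matroidal bookkeeping distinguishing flats whose rank drops under deletion of $\ell$ from those whose rank is preserved, to be the main obstacle.
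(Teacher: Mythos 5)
There is a genuine gap: your proof stops at what you yourself call ``the main obstacle,'' namely the combinatorial reorganization of the two binomial sums into the Tutte-coefficient expression for $\deg(I_{a-1}(\C'))$ coming from Theorem \ref{main}. That step is never carried out, so the argument is incomplete. Moreover, the obstacle is self-inflicted: it appears only because you route the right-hand side through Theorem \ref{main} and the Tutte polynomial of $\C'$. The paper instead applies Proposition \ref{primarydecomp} directly to $I_{a-1}(\C')$, writing $I_{a-1}(\C')=\bigcap_{\mathfrak p'}(\mathfrak p')^{(a-1)-(n-1)+\nu'(\mathfrak p')}\cap K'$ with $\nu'(\mathfrak p')$ counting the forms of $\C'$ in $\mathfrak p'$, and then observes that the height-$(k-r)$ primary components of $I_a(\C):\ell$ and of $I_{a-1}(\C')$ are \emph{literally the same ideals with the same exponents}: for $\ell\notin\mathfrak p$ one has $\nu'(\mathfrak p)=\nu(\mathfrak p)$ and the exponent is untouched by the colon; for $\ell\in\mathfrak p$ with $\nu(\mathfrak p)\ge n-a+2$ one has $\nu'(\mathfrak p)=\nu(\mathfrak p)-1$ and the colon drops the exponent by exactly one, matching $(a-1)-(n-1)+\nu'(\mathfrak p)$; and for $\ell\in\mathfrak p$ with $\nu(\mathfrak p)=n-a+1$ the component becomes the unit ideal while $\mathfrak p$ simultaneously ceases to be minimal over $I_{a-1}(\C')$. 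The degree equality then follows from Lemma \ref{HilbPoly}(1) with no Pascal-type identity and no flat bijection at all. If you insist on your route, you must additionally justify the bijection between rank-$(k-r)$ flats of $\M(\C)\setminus\ell$ and flats of $\M(\C)$, including ruling out flats $F\ni\ell$ whose rank drops upon deleting $\ell$ (this is where working with the prime ideals $\mathfrak p_F$ themselves, as the paper does, quietly sidesteps the issue), and then actually verify the coefficient identity you postpone.

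A smaller point: your justification that $\mathrm{ht}(I_a(\C):\ell)=k-r$ (``the first sum is nonempty'') is not quite right as stated, since all minimal primes could a priori contain $\ell$ when $j\ge 2$; the clean argument is via Proposition \ref{prop:radicalColon}, which gives $\mathrm{ht}(I_a(\C):\ell)=\mathrm{ht}(I_{a-1}(\C'))$, combined with the height computation for $I_{a-1}(\C')$ that your hypothesis on $(j,p_r,p_r')$ guarantees. Your reductions (coloop case, the equivalence $p_r=p_r'\Leftrightarrow d_r(\C')=d_r(\C)-1$, and the left-hand-side computation) do agree with the paper.
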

\begin{proof} Because of Lemma \ref{coloop}, we can assume that $\ell$ is not a coloop which means that $\dim(\C')=k$.

Let $m=\max \{|J|:r(J)=k-r\}$ and $m'=\max \{|J|:r'(J)=k-r\}$. From Corollary \ref{genhamming}, we know that $d_r(\C)=n-m$ and $d_r(\C')=n-m'$. Clearly, we have $m'\leq m\leq m'+1$ which implies that $$d_r(\C')\leq d_r(\C)\leq d_r(\C')+1.$$

Suppose $a=d_r(\C)+j$, where $j\in\{1,\ldots,d_{r+1}(\C)-d_r(\C)\}$. So ${\rm ht}(I_a(\C))=k-r$. Since $I_a(\C)\subset I_{a-1}(\C')$, then ${\rm ht}(I_a(\C))\leq {\rm ht}(I_{a-1}(\C'))$. The inequality is strict exactly when $a-1=d_r(\C)+j-1\leq d_r(\C')$ which is only possible when $j=1$ and $d_r(\C)=d_r(\C')$. Let us analyze this situation. Let $a=d_r(\C)+1$ and consider the decomposition obtained in Proposition \ref{primarydecomp}
\[
 I_a(\C)=\mathfrak p_1\cap\cdots\cap \mathfrak p_s\cap K,
\]
where ${\rm Min}_{k-r}(I_a(\C))=\{\mathfrak p_1,\ldots,\mathfrak p_s\}$, and $K$ is an
ideal of height $>k-r$. The reason why the powers of $\mathfrak p_i$ are all 1 is explained at the beginning of the proof of Theorem \ref{main}, when we analyzed the case $j=1$.

Coloning this by $\ell$, we have $$I_a(\C):\ell=(\mathfrak p_1:\ell)\cap\cdots\cap
(\mathfrak p_s:\ell)\cap (K:\ell).$$ Since $K\subseteq K:\ell$, we have ${\rm
  ht}(K:\ell)>k-r$. From Proposition \ref{prop:radicalColon}, ${\rm
  ht}(I_{a-1}(\C'))={\rm ht}(I_a(\C):\ell)$, so in order to have ${\rm ht}(I_{a-1}(\C'))=k-r+1$, we must have $(\mathfrak p_i:\ell)=\langle 1\rangle$, for all $i$, which is to say that $\ell\in\mathfrak p$ for all $\mathfrak p\in {\rm Min}_{k-r}(I_a(\C))$. We are unable to analyze this case any further since we do not have any information about $K$ when $r>1$. Note that $d_r(\C)=d_r(\C')$ is equivalent to $p_r'+1=p_r$.

In the rest of this argument, assume that either $j>1$ or $d_r(\C)=d_r(\C')+1$. We have ${\rm ht}(I_a(\C))={\rm ht}(I_{a-1}(\C'))=k-r$. From Proposition \ref{primarydecomp} we have
$$I_a(\C)=\bigcap_{\mathfrak p\in {\rm Min}_{k-r}(I_a(\C))}\mathfrak p^{a-n+\nu(\mathfrak p)}\cap K,$$ where ${\rm ht}(K)>k-r$, and $$I_{a-1}(\C')=\bigcap_{\mathfrak p'\in {\rm Min}_{k-r}(I_{a-1}(\C'))}(\mathfrak p')^{a-n+\nu'(\mathfrak p')}\cap K',$$ where ${\rm ht}(K')>k-r$, and $\nu'(\mathfrak p')$ is the number of linear forms in $\C'$ that belong to $\mathfrak p'$.

Since $I_a(\C)\subset I_{a-1}(\C')$, yet they have the same height equal to $k-r$, one has that ${\rm Min}_{k-r}(I_{a-1}(\C'))\subseteq {\rm Min}_{k-r}(I_a(\C))$. Let $\mathfrak p\in {\rm Min}_{k-r}(I_a(\C))$.

If $\ell\in\mathfrak p$ and $\nu(\mathfrak p)=n-a+1$, then $\mathfrak p\notin {\rm Min}_{k-r}(I_{a-1}(\C'))$. This is because $\nu'(\mathfrak p)=(n-a+1)-1=n-a<(n-1)-(a-1)+1$.

If $\ell\in\mathfrak p$ and $\nu(\mathfrak p)\geq n-a+2$, then $\mathfrak p\in {\rm Min}_{k-r}(I_{a-1}(\C'))$ and $\nu'(\mathfrak p)=\nu(\mathfrak p)-1$.

If $\ell\notin\mathfrak p$, then $\mathfrak p\in{\rm Min}_{k-r}(I_{a-1}(\C'))$ since $n-a+1=(n-1)-(a-1)+1$. Moreover, $\nu(\mathfrak p)=\nu'(\mathfrak p)$, since $\ell\notin \mathfrak p$.

If $\mathfrak q$ is a linear ideal, $m\geq 1$ is any integer, and $\ell$ is a
linear form, then one easily sees that
$$\mathfrak q^m:\ell =\left\{
  \begin{array}{ll}
    \mathfrak q^m, & \mbox{ if } \ell\notin\mathfrak q; \\
    \mathfrak q^{m-1}, & \mbox{ if } \ell\in\mathfrak q.
  \end{array}
\right.$$

With this we have $$I_a(\C):\ell=\left(\bigcap_{\mathfrak p\in A}\mathfrak p^{a-n+\nu(\mathfrak p)-1}\right)\cap\left(\bigcap_{\mathfrak p\in B}\mathfrak p^{a-n+\nu(\mathfrak p)-1}\right)\cap\left(\bigcap_{\mathfrak p\in C}\mathfrak p^{a-n+\nu(\mathfrak p)}\right)\cap (K:\ell),$$ where $A:={\rm Min}_{k-r}(I_a(\C))\setminus {\rm Min}_{k-r}(I_{a-1}(\C'))$, $B:=\{\mathfrak p\in {\rm Min}_{k-r}(I_a(\C))|\ell\in\mathfrak p \mbox{ and }\nu(\mathfrak p)\geq n-a+2\}$, and $C:=\{\mathfrak p\in {\rm Min}_{k-r}(I_a(\C))|\ell\notin\mathfrak p\}$.

From our previous discussion, if $\mathfrak p\in A$, then $\ell\in\mathfrak p$
and $\nu(\mathfrak p)=n-a+1$. So the first term disappears. Also,
$B\cup C={\rm Min}_{k-r}(I_{a-1}(\C'))$, and we have
$$\left(\bigcap_{\mathfrak p\in B}\mathfrak p^{a-n+\nu(\mathfrak
  p)-1}\right)\cap\left(\bigcap_{\mathfrak p\in C}\mathfrak p^{a-n+\nu(\mathfrak
  p)}\right)= \bigcap_{\mathfrak p'\in {\rm Min}_{k-r}(I_{a-1}(\C'))}(\mathfrak
p')^{a-n+\nu'(\mathfrak p')}.$$ So, from Lemma \ref{HilbPoly}(1) we can conclude that in this case
$$\deg(I_a(\C):\ell)=\deg(I_{a-1}(\C')).$$
\end{proof}

\begin{rem} In the proof of Proposition \ref{recdegree}, we could have used our Theorem \ref{main}, coupled with applying Hilbert polynomial to the short exact sequence
\[
 0\longrightarrow \frac{R(-1)}{I_a(\C):\ell}\stackrel{\cdot \ell}\longrightarrow \frac{R}{I_a(\C)}\longrightarrow \frac{R}{\langle x_k,I_a(\mathcal C'')\rangle}\longrightarrow 0,
\]
 and using the standard deletion-contraction formula $$T_{\C}(x,y)=T_{\C'}(x,y)+T_{\C''}(x,y).$$ In order to match the coefficients appropriately, it turns out that the same kind of analysis as we did above must be made, with the same difficulty when studying the case $j=1$ and $p_r'+1=p_r$.
\end{rem}

\vskip .1in

\noindent{\bf Acknowledgement} We are very grateful to the anonymous referee
for a very thorough and constructive report, which led to a significant
improvement in the quality of this manuscript.

\renewcommand{\baselinestretch}{1.0}
\small\normalsize 

\bibliographystyle{amsalpha}

\end{document}